\newtheorem{theorem}{Theorem}[section]
\theoremstyle{plain}
\newtheorem{lemma}[theorem]{Lemma}
\newtheorem{corollary}{Corollary}[theorem]
\newtheorem{prop}[theorem]{Proposition}
\theoremstyle{definition}
\newtheorem{definition}{Definition}[section]
\newtheorem{exmp}{Example}[section]
\theoremstyle{remark}
\newtheorem*{remark}{Remark}
\begin{document}

\title[$n-$absorbing $I-$prime]{$n-$absorbing $I-$prime hyperideals in multiplicative hyperrings}

\author[A. A. Mena]{Ali A. Mena}
\address{Mathematics Department, Faculty of Science\\
	Soran University\\ 44008, Soran, Erbil
	Kurdistan Region, Iraq}
\email {ali07863@gmail.com}

\author[I. Akray]{Ismael Akray}
\address{Mathematics Department, Faculty of Science\\
	Soran University\\ 44008, Soran, Erbil
	Kurdistan Region, Iraq}

\email {ismael.akray@soran.edu.iq}

\date{\today}
\subjclass[2010]{16Y20, 13A15}
\keywords{hyperring, multiplicative hyperring, prime hyperideal, I-prime hyperideal}
\maketitle

\begin{abstract}
In this paper, we define the concept   $I-$prime hyperideal in a multiplicative hyperring $R$.  A proper hyperideal $P$ of $R$ is  an $I-$prime hyperideal if for $a, b \in R$ with $ab \subseteq P-IP$ implies  $a \in P$ or $b \in P$. We provide some characterizations of $I-$prime hyperideals. Also we conceptualize and study the notions  $2-$absorbing $I-$prime and $n-$absorbing $I-$prime hyperideals into multiplicative hyperrings as generalizations of prime ideals. A proper hyperideal $P$ of a hyperring $R$ is an $n-$absorbing $I-$prime hyperideal if for $x_1, \cdots,x_{n+1} \in R$ such that $x_1 \cdots x_{n+1} \subseteq P-IP$, then $x_1 \cdots x_{i-1} x_{i+1} \cdots   x_{n+1} \subseteq P$ for some $i \in \{1, \cdots ,n+1\}$. We  study some properties of such generalizations.  We prove that if $P$ is an $I-$prime hyperideal of a hyperring $R$, then each of $\frac{P}{J}$, $S^{-1} P$, $f(P)$, $f^{-1}(P)$, $\sqrt{P}$ and $P[x]$ are $I-$prime hyperideals under suitable conditions and suitable hyperideal $I$, where $J$ is a hyperideal contains in $P$. Also, we characterize $I-$prime hyperideals in the decomposite hyperrings. Moreover, we show that the hyperring with finite number of maximal hyperideals in which every proper hyperideal is $n-$absorbing $I-$prime is a finite product of hyperfields.
\end{abstract}

\section{Introduction}
 Many concepts in modern algebra was generalized by generalizing their structures to hyperstructure. The French mathematician F. Marty in 1934 introduced the concept hyperstructure or multioperation by returning a set of values instead of a single value \cite{marty}. The hyperstructures theory was studied from many points of view and applied to several areas of mathematics especially in computer science and logic. In \cite{marty} the author presented the concept hypergroup and after that in 1937, the authors H. S. Wall \cite{wall} and M. Kranser  \cite{Krasner} also gave their respective definitions of hypergroup as a generalization of groups.
 
 \vspace{0.5 cm}
 The hyperrings were introduced by many authors. A type of hyperring where the multiplication is a hyperoperation while the addition is just an operation introduced by Rota in 1982 and called a multiplicative hyperring \cite{rota}. A well known example on multiplicative hyperring is that for a ring $(R,+, \cdot)$ and  corresponding to every non-singleton subset $A \in P^{*}(R)=P(R) \backslash\{\phi\}$ where $P(R)$ is the power set of $R$, there exists a multiplicative hyperring with absorbing zero $\left(R_{A},+, \circ\right)$ where $R_{A}=R$ and for any $x, y \in R_{A}, x \circ y=\{x \cdot a \cdot y: a \in A\} $ (see \cite{procesi,vougiouklis}). Another type of hyperring in which addition is a hyperoperation while the multiplication is an operation introduced by  M. Krasner in 1983 and called Krasner hyperring \cite{Krasner}. The hyperrings in which the additions and multiplications are hyperoperations where introduced by De Salvo \cite{Desalvo}. Procesi and Rota in \cite{procesi} have conceptualized the notion of primeness of hyperideal in a multiplicative hyperring. A proper hyperideal $P$ is called prime hyperideal if $ab\subseteq P$, then $a \in P$ or $b \in P$. The radical of a hyperideal $P$ denoted by $\sqrt{P}$ is the intersection of all prime hyperideals that contains $P$. Some generalizations of prime hyperideals can be found in \cite{Anbarloei,Dasgupta,Ulucak} .
 \vspace{0.5 cm}
 
 In the recent years many generalizations of prime ideals were introduced. Here state some of them. The authors in \cite{badwi} and \cite{anderson} introduced the notions $2-$absorbing and $n-$absorbing ideals in commutative rings. A proper ideal $P$ is called $2-$absorbing (or $n-$absorbing) ideal if whenever the product of three (or $n+1$) elements of $R$ in $P$, the product of two (or $n$) of these elements is in $P$.  
 
 In \cite{Akray} and \cite{akraym}, the author Akray introduced the notions $I-$prime ideal and $n-$absorbing $I-$ideal in classical rings as a generalization of prime ideals. For fixed proper ideal $I$ of a commutative ring $R$ with identity, a proper ideal $P$ of $R$ is an $I-$prime if for $a, b \in R$  with $a.b \in P-IP$, then $a \in P$ or $b \in P$. A proper ideal $P$ of $R$ is an $n-$absorbing $I-$ideal if for $x_1, \cdots,x_{n+1} \in R$ such that $x_1 \cdots x_{n+1} \in P-IP$, then $x_1 \cdots x_{i-1} x_{i+1} \cdots  x_{n+1} \in P$ for some $i \in \{1,2, \cdots ,n+1\}$.
 \vspace{0.5 cm} 
 
 In this paper all hyperrings are commutative hyperring with identity. Here we want to define the $I-$prime hyperideal, $2-$absorbing $I-$hyperideal and $n-$absorbing $I-$hyperideal in multiplicative hyperrings. For fixed proper hyperideal $I$ of a multiplicative hyperring $R$, a proper hyperideal $P$ of $R$ is an $I-$prime  if $a, b \in R$ with $a.b \subseteq P-IP$, then $a \in P$ or $b \in P$. A proper hyperideal $P$ of $R$ is a $2-$absorbing $I-$prime hyperideal if for $x_1,x_2,x_3 \in R$ such that $x_1 x_2 x_3 \subseteq P-IP$, then $x_1 x_2 \subseteq P$ or $x_1 x_3 \subseteq P$  or $x_2 x_3 \subseteq P$. A proper hyperideal $P$ of $R$ is an $n-$absorbing $I-$prime hyperideal if for $x_1, \cdots,x_{n+1} \in R$ such that $x_1 \cdots x_{n+1} \subseteq P-IP$, then $x_1 \cdots x_{i-1} x_{i+1} \cdots  x_{n+1} \subseteq P$ for some $i \in \{1,2, \cdots ,n+1\}$.
  \vspace{0.5 cm}
   
 In section two, we define $I-$prime hyperideal and we prove some equivalents of $I-$prime hyperideal (Theorem \ref{qqq}). Moreover, we establish $I-$prime hyperideals in finite product of hyperrings (Theorem \ref{qqqqq}). 
 Section three devoted for $2-$absorbing $I-$prime and $n-$absorbing $I-$prime hyperideals and we prove Theorem \ref{3.7} 
 which state (Let $R=\prod_{i=1}^{n+1}R_i$ and $P$ be a proper non-zero hyperideal of $R$. If P is an $(n+1)-$absorbing $I-$prime hyperideal of $R$, then $P=P_1\times P_2\times \cdots \times P_{n+1}$ for some proper $n-$absobing $I_i-$prime hyperideals $P_1,\cdots ,P_{n+1}$ of $R_1,\cdots,R_{n+1}$ respectively, where  $I=\prod_{i=1}^{n+1}I_i$ and $I_i = R_i$, $ \forall i=1,2,\cdots,n+1$). Also,  we prove Theorem \ref{3.9} that state (Let $\mid Max(R)\mid \geq n+1 \geq 2.$ Then each proper hyperideal of $R$ is an $n-$absorbing $I-$prime hyperideal if and only if each quotient of $R$ is a product of $(n+1)-$fields). Finally, let $P$ be an $n-$absorbing $I-$hyperideal of a hyperring $R$. Then there are at most $n^{th}$ prime hyperideals    of $R$ that are minimal over $P$ (Theorem \ref{3.10}).

\section{$I-$prime hyperideals}

We start this section by defining the concept of $I-$prime hyperideal and some example of it. A proper hyperideal $P$ of $R$ is  an $I-$prime hyperideal if for $a, b \in R$ with $ab \subseteq P-IP$ implies  $a \in P$ or $b \in P$.

 In the following examples we show that the class of $I-$prime hyperideals contains properly the class of prime hyperideals. 
	
	\begin{exmp}
		Consider the hyperring of integers $(\mathbb{Z},+,\circ)$, $ A=\{0,1\} \subseteq \mathbb{Z}$ and $n \circ m= \{ nam: a \in A\}= \{0,nm\}$. So $4\mathbb{Z}$ is not prime hyperideal, since $2\circ 2=\{0,4\} \subseteq 4\mathbb{Z}$ and $2\notin 4\mathbb{Z}$. But $4\mathbb{Z}$ is $2\mathbb{Z}$-prime hyperideal, since $\forall a,b\in \mathbb{Z},a\circ b=\{0,ab\}\nsubseteq$ $4\mathbb{Z}-(2\mathbb{Z}\circ 4\mathbb{Z})= 4\mathbb{Z}-8\mathbb{Z}.$
			\end{exmp}
	\begin{exmp}
		Let $(\mathbb{Z},+,\circ)$ be the hyperring of integers and $A=\{4,8\} \subseteq \mathbb{Z}$ and $a\circ b = aAb=\{4ab,8ab\}$. Then
		$1\circ 1= \{4,8\}\subseteq 2\mathbb{Z}$ but $1\notin 2\mathbb{Z}$
	and hence $2\mathbb{Z}$ is not prime hyperideal. However $2\mathbb{Z}$ is not $8\mathbb{Z}-$prime hyperideal, since 
		$2\mathbb{Z}-(8\mathbb{Z} \circ 2\mathbb{Z})=2\mathbb{Z}-(64\mathbb{Z} \cup 128\mathbb{Z})$ $=2\mathbb{Z}-64\mathbb{Z} $ which contains 	$1\circ 1$. 
Therefore, $2 \mathbb{Z}$ is neither prime hyperideal nor $8\mathbb{Z}$-prime hyperideal of $(\mathbb{Z},+,\circ)$.\\
	\end{exmp}

 The intersection of two $I-$prime hyperideals is not $I-$prime hyperideal let us explain our claim by this example.
\begin{exmp}
		Consider the hyperring of integers $(\mathbb{Z},+,\circ)$,  where $a\circ b$ = $\{2ab,3ab\}$. Let 
	$P=2\mathbb{Z}$, $I=3\mathbb{Z}$ and
	$P-IP=2\mathbb{Z}-(3\mathbb{Z})\circ (2\mathbb{Z})=2\mathbb{Z}-6A \mathbb{Z}=2\mathbb{Z}-(12\mathbb{Z}\cup 18\mathbb{Z})$. 
Thus	$P$ is $I-$prime hyperideal. Now, for $Q=3\mathbb{Z}$ and  $I=3\mathbb{Z}$ we have 
		$Q-IQ=3\mathbb{Z}-(3\mathbb{Z})\circ (3\mathbb{Z})=3\mathbb{Z}-9A \mathbb{Z}=3\mathbb{Z}-(18\mathbb{Z}\cup 27\mathbb{Z})$. So $Q$ is $I-$prime hyperideal of $\mathbb{Z}$ while 		
		$P\cap Q=6\mathbb{Z}$ is not $3\mathbb{Z}-$prime hyperideal, since 	$6\mathbb{Z}-(3\mathbb{Z})\circ (6\mathbb{Z})=6\mathbb{Z}-(36\mathbb{Z} \cup 54\mathbb{Z})$ 	$2\circ 3$=\{12,18\}$\subseteq 6\mathbb{Z}-(36\mathbb{Z} \cup 54\mathbb{Z})$, but neither 2 $\in 6\mathbb{Z}$ nor $3 \in 6\mathbb{Z}$. 
		\end{exmp}
	The following lemma is a generalization of Lemma 2.1 in \cite{Akray}.
\begin{lemma}
 Let $P$ be a proper hyperideal of a hyperring $(R,+, \circ)$. Then $P$ is an $I$-prime hyperideal if and only if $P / I P$ is weakly prime hyperideal in $R / I P$.
\end{lemma}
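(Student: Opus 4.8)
The proof will be a matter of transporting, through the canonical quotient map, the defining implication of ``$I$-prime'' for $P$ into the defining implication of ``weakly prime'' for $P/IP$. First I would record the set-up facts: $IP$ is a hyperideal of $R$ contained in $P$ (since $P$ absorbs products by elements of $R\supseteq I$), so the quotient hyperring $R/IP$ is defined, inherits commutativity and an identity from $R$, carries the hyperoperation $\overline a\circ\overline b=\{\,x+IP: x\in a\circ b\,\}$, and $P/IP=\{\,p+IP: p\in P\,\}$ is a hyperideal of it. I would also note the elementary equivalence $a+IP\in P/IP \iff a\in P$: one direction is immediate, the other uses $IP\subseteq P$ together with closure of $P$ under addition. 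In particular this shows $P/IP$ is a proper hyperideal of $R/IP$ exactly when $P$ is a proper hyperideal of $R$.

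The heart of the argument is three translations, stated for $a,b\in R$ and their images $\overline a,\overline b$, and using that addition in a multiplicative hyperring is single-valued and that $0\in IP\subseteq P$: (i) $\overline a\circ\overline b\subseteq P/IP$ if and only if $a\circ b\subseteq P$; (ii) $0_{R/IP}\notin\overline a\circ\overline b$ if and only if $(a\circ b)\cap IP=\emptyset$ — so, together with (i), the hypothesis ``$0\notin\overline a\circ\overline b\subseteq P/IP$'' amounts exactly to ``$a\circ b\subseteq P-IP$''; and (iii) $\overline a\in P/IP$ if and only if $a\in P$. Granting these, ``$0\notin\overline a\circ\overline b\subseteq P/IP$ implies $\overline a\in P/IP$ or $\overline b\in P/IP$'' is literally ``$a\circ b\subseteq P-IP$ implies $a\in P$ or $b\in P$''. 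Assembling: for the forward direction assume $P$ is $I$-prime and take $\overline a,\overline b$ with $0\notin\overline a\circ\overline b\subseteq P/IP$; (i)–(ii) give $a\circ b\subseteq P-IP$, $I$-primeness gives $a\in P$ or $b\in P$, and (iii) concludes; the converse is the same equivalences read in reverse, combined with the properness remark above.

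I expect the only place needing genuine care to be the coset computations behind (i) and (ii): concretely, verifying $a\circ b\subseteq a\circ b+IP\subseteq P$, so that $\overline a\circ\overline b\subseteq P/IP$ really forces $a\circ b\subseteq P$ and not merely the a priori weaker $a\circ b+IP\subseteq P$; and verifying that for $x\in a\circ b$ one has $x+IP=0_{R/IP}$ precisely when $x\in IP$, which gives the ``$\cap IP=\emptyset$'' reformulation in (ii). Both are routine once the quotient hyperoperation is written out explicitly, so beyond this bookkeeping there is no real obstacle; the statement is essentially a definition-chase.
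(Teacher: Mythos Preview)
Your proposal is correct and follows essentially the same approach as the paper: both argue by transporting the defining implication through the canonical quotient map $R\to R/IP$, using that $\overline a\circ\overline b\subseteq P/IP$ corresponds to $a\circ b\subseteq P$ and that the ``nonzero'' hypothesis in the quotient corresponds to avoiding $IP$. Your write-up is in fact more careful than the paper's about the coset bookkeeping---in particular your translation (ii), reading ``weakly prime'' as $0\notin\overline a\circ\overline b$ (equivalently $(a\circ b)\cap IP=\emptyset$), is exactly what is needed to match the condition $a\circ b\subseteq P-IP$, whereas the paper's phrasing ``$\{0\}\neq\overline a\circ\overline b$'' only literally gives $a\circ b\nsubseteq IP$.
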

\begin{proof}
	 $(\Rightarrow)$ Let $P$ be an $I$-prime hyperideal in $(R,+, \circ)$. Let $a, b \subseteq R$ with $\{0\} \neq(a+$ $I P)(b+I P)=a \circ b+I P \in P / I P$. Then $a \circ b \subseteq P-I P$ implies $a \subseteq P$ or $b \subseteq P$, hence $a+I P \subseteq P / I P$ or $b+I P \subseteq P / I P$. So $P / I P$ is weakly prime hyperideal in $R / I P$.
	$(\Leftarrow)$ Suppose that $P / I P$ is weakly prime hyperideal in $R / I P$ and take $r, s \subseteq R$ such that $r \circ s \subseteq P-I P$. Then $\{0\} \neq r \circ s+I P=(r+I P)(s+I P) \subseteq P / I P$ so $r+I P \subseteq P / I P$ or $s+I P \subseteq P / I P$. Therefore $r \subseteq P$ or $s \subseteq P$. Thus $P$ is an $I$-prime hyperideal in $R$.
	\end{proof}
	Let $(R,+, \circ)$ be a hyperring and $x$ be an indeterminate. Then $(R[x],+,\bullet)$ is a polynomial multiplicative hyperring where $ax^n \bullet bx^m = (a \circ b) x^{n+m}$  (see \cite{ciampi}).
\begin{theorem}
If $P$ is an $I-$prime hyperideal of 	 $(R,+, \circ)$, then $P[x]$ is $I[x]-$prime hyperideal of $(R[x],+,\bullet)$ .		 
\end{theorem}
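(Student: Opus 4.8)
The plan is to argue coefficientwise, imitating the classical proof that a prime ideal yields a prime polynomial ideal, while carefully carrying along the set-difference $P[x]-I[x]P[x]$. First I would record that $P[x]$ and $I[x]$ are proper hyperideals of $(R[x],+,\bullet)$ (properness because, $P$, $I$ being proper, $1\notin P$ and $1\notin I$, so the constant polynomial $1$ lies in neither, and the hyperideal axioms for $P[x]$, $I[x]$ pass coefficientwise through the rule $ax^{n}\bullet bx^{m}=(a\circ b)x^{n+m}$), and then I would establish the identity $I[x]\bullet P[x]=(IP)[x]$: the inclusion $\subseteq$ is clear since the coefficients of $(\sum a_{i}x^{i})\bullet(\sum b_{j}x^{j})$ with $a_{i}\in I$, $b_{j}\in P$ lie in sumsets of the sets $a_{i}\circ b_{j}\subseteq IP$, while $\supseteq$ holds because each generator $(a\circ b)x^{k}$ of $(IP)[x]$ equals $ax^{k}\bullet bx^{0}$ with $ax^{k}\in I[x]$ and $bx^{0}\in P[x]$. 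Hence $P[x]-I[x]P[x]=P[x]-(IP)[x]$. I would also note that, writing $f=\sum_{i}a_{i}x^{i}$ and $g=\sum_{j}b_{j}x^{j}$, an element of $f\bullet g$ has $x^{k}$-coefficient of the form $\sum_{i+j=k}c_{ij}$ with $c_{ij}\in a_{i}\circ b_{j}$, and as the $c_{ij}$ vary these coefficients sweep out the sumset $\sum_{i+j=k}(a_{i}\circ b_{j})$; since the addition of $R$ is an ordinary operation under which $P$ is closed, $f\bullet g\subseteq P[x]$ is equivalent to $\sum_{i+j=k}(a_{i}\circ b_{j})\subseteq P$ for every $k$.

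Now take $f,g\in R[x]$ with $f\bullet g\subseteq P[x]-(IP)[x]$ and suppose, for contradiction, that $f\notin P[x]$ and $g\notin P[x]$. Put $r=\min\{i:a_{i}\notin P\}$ and $s=\min\{j:b_{j}\notin P\}$. Applying the translation above with $k=r+s$ and peeling off the summands $a_{i}\circ b_{j}$ with $i<r$ (so $a_{i}\in P$) or $i>r$, i.e. $j<s$ (so $b_{j}\in P$), each of which is contained in $P$, and using that $X+Y\subseteq P$ with $Y\subseteq P$ forces $X\subseteq P$, I would conclude $a_{r}\circ b_{s}\subseteq P$. If in addition $a_{r}\circ b_{s}\cap IP=\emptyset$, i.e. $a_{r}\circ b_{s}\subseteq P-IP$, then the $I$-primeness of $P$ yields $a_{r}\in P$ or $b_{s}\in P$, contradicting the choice of $r,s$; so the theorem follows once this extra disjointness is secured.

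Securing it is, I expect, the main obstacle, because the hyperoperation meshes poorly with the set-difference: to contradict $f\bullet g\cap(IP)[x]=\emptyset$ I would need an element of $f\bullet g$ \emph{all} of whose coefficients lie in $IP$, and merely knowing that the lower coefficients of $f$ and $g$ lie in $P$ (not necessarily in $IP$) does not obviously produce that. I would try to resolve this by choosing the counterexample pair $(f,g)$ with $\deg f+\deg g$ minimal and showing that if $a_{r}\circ b_{s}$ met $IP$ one could strip a monomial off $f$ or off $g$ — exploiting that at the top degree $a_{m}\circ b_{n}\subseteq P$, so that one leading coefficient can be absorbed into $P$ — to produce a strictly smaller counterexample. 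Alternatively, and perhaps more cleanly, I would pass to the quotient via the Lemma above: $P$ is $I$-prime iff $P/IP$ is weakly prime in $R/IP$, and likewise $P[x]$ is $I[x]$-prime iff $P[x]/(IP)[x]\cong(P/IP)[x]$ is weakly prime in $R[x]/(IP)[x]\cong(R/IP)[x]$, which reduces everything to the (itself delicate) statement that weak primeness of a hyperideal is inherited by its polynomial extension. Granting the delicate point by either route, we obtain $f\in P[x]$ or $g\in P[x]$, so $P[x]$ is an $I[x]$-prime hyperideal of $(R[x],+,\bullet)$.
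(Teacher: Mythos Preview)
Your argument is considerably more careful than the paper's, but it is not complete, and you say so yourself. The paper's proof is two lines: it writes ``without loss of generality, let $a(x)=cx^{n}$ and $b(x)=dx^{m}$'' and then the conclusion is immediate from $c\circ d\subseteq P-IP$. No justification for this monomial reduction is offered, and the difficulty you isolate is precisely what that reduction hides: from $f\bullet g\subseteq P[x]-(IP)[x]$ one obtains $a_{r}\circ b_{s}\subseteq P$ for the least-index coefficients outside $P$, but there is no evident reason why $a_{r}\circ b_{s}$ should miss $IP$, since the hypothesis only guarantees that \emph{some} element of $f\bullet g$ has \emph{some} coefficient outside $IP$, and that coefficient need not live in degree $r+s$.

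Neither of your proposed repairs is carried through. The minimal-degree descent would require a way to strip a monomial from $f$ or $g$ while preserving both containment $f\bullet g\subseteq P[x]$ and non-containment $f\bullet g\nsubseteq(IP)[x]$, and you do not supply one; the sketch about absorbing a leading coefficient into $P$ does not by itself control the $(IP)[x]$ side. The quotient route trades the problem for ``$Q$ weakly prime in $S$ $\Rightarrow$ $Q[x]$ weakly prime in $S[x]$'', which is of the same order of difficulty and is not established here (and is known to be delicate already for ordinary rings). So your proposal has a genuine gap at exactly the point you flag; the difference from the paper is that you have located it honestly, while the paper conceals it behind an unjustified ``without loss of generality''.
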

\begin{proof}
	Let $a(x) \bullet b(x) \subseteq P[x]-I[x]\bullet P[x] = P[x]-(IP)[x]  $. Without loss of generality, let $a(x)=cx^n$ and $b(x)=dx^m$, for $c,d \in R$. Thus $c \circ d x^{n+m} \subseteq P[x]$ so $c \circ d \subseteq P$ and $ c \circ dx^{n+m} \nsubseteq IP[x] $
implies $c \circ d \nsubseteq IP$. P $I-$prime hyperideal gives us $c \in P$ or $d \in P$. Hence $a(x)=cx^n \in P[x]$ or $b(x)=dx^m \in P[x]$ and so $P[x]$ is an $I[x]-$prime.
	\end{proof}
\begin{corollary}
	Let $P$ be an $I-$prime hyperideal of $R$. Then $P[x]$ is an $I-$prime hyperideal of $R[x]$.
\end{corollary}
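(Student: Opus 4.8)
The plan is to deduce this directly from the preceding theorem; the only point needing care is the meaning of ``$I$-prime hyperideal of $R[x]$'' when $I$ is a hyperideal of $R$ rather than of $R[x]$. First I would record the relevant identification: viewing $I$ as the set of constant polynomials inside $R[x]$, the product $I\bullet P[x]$ --- that is, the hyperideal of $R[x]$ generated by all $a\circ f$ with $a\in I$ and $f\in P[x]$ --- coincides with $(IP)[x]$, which is exactly the hyperideal $I[x]\bullet P[x]$ that appears in the proof of Theorem~\ref{...}. Consequently
\[
P[x]-I\bullet P[x]=P[x]-(IP)[x]=P[x]-I[x]\bullet P[x].
\]

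With this identification in hand, the argument is immediate. Let $a(x),b(x)\in R[x]$ with $a(x)\bullet b(x)\subseteq P[x]-I\bullet P[x]=P[x]-(IP)[x]$. By the theorem, $P[x]$ is an $I[x]$-prime hyperideal of $R[x]$, so $a(x)\in P[x]$ or $b(x)\in P[x]$; since this is precisely the defining condition for $P[x]$ to be $I$-prime once $I\bullet P[x]$ is read as above, the corollary follows. Alternatively, one can reprove it in the same style as the theorem: reduce to monomials $a(x)=cx^n$ and $b(x)=dx^m$, note that $c\circ d\subseteq P$ and $c\circ d\nsubseteq IP$, and invoke the hypothesis that $P$ is $I$-prime in $R$ to get $c\in P$ or $d\in P$, hence $a(x)=cx^n\in P[x]$ or $b(x)=dx^m\in P[x]$.

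I expect the only genuine obstacle to be the bookkeeping around products of hyperideals --- specifically, making the equality $I\bullet P[x]=(IP)[x]$ precise. In a multiplicative hyperring the product of two hyperideals is the hyperideal generated by the union of the element-wise hyperproducts, so one must check that this generation process is compatible with the passage from $R$ to the polynomial hyperring $R[x]$ (equivalently, that the coefficients of a polynomial in $I\bullet P[x]$ lie in $IP$ and conversely). This is routine but should be stated explicitly, since the whole content of the corollary rests on it.
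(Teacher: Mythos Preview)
Your proposal is correct and matches the paper's approach: the corollary is stated there without proof, as an immediate consequence of the preceding theorem, and your deduction via the identification $I\bullet P[x]=(IP)[x]=I[x]\bullet P[x]$ is precisely the missing justification that makes the corollary a genuine restatement of the theorem. Your added remark that this identification is the only nontrivial point (and is routine once one unwinds the definition of a product of hyperideals) is apt; the alternative monomial argument you sketch simply replays the theorem's proof and is unnecessary once the identification is in hand.
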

\begin{theorem}
		Let $R$ be a hyperring and $ f:R\longrightarrow R $ be a good epiomorphism and let $P$ be an I-prime hyperideal of $R$ with $Ker f \subseteq P$. Then $f(P) $ is an $I-$prime hyperideal.
	\end{theorem}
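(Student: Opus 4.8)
The plan is to exploit surjectivity of $f$ to lift any instance of the $I$-prime condition in the image back to $R$, where the hypothesis $\operatorname{Ker}f\subseteq P$ is available. Recall that since $f$ is a good homomorphism, $f(x\circ y)=f(x)\circ f(y)$ for all $x,y\in R$ (equality of sets), and more generally $f$ carries the product hyperideal $IP$ onto $f(I)f(P)$, because $f$ commutes with finite sums and with hyperproducts; since $f$ is onto and $\operatorname{Ker}f\subseteq P$, the image $f(P)$ is again a hyperideal and one has the correspondence identity $f^{-1}(f(P))=P$. I read the conclusion as: $f(P)$ is an $f(I)$-prime hyperideal (which is the stated "$I$-prime" precisely when $f(I)=I$, e.g. if also $\operatorname{Ker}f\subseteq I$).

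First I would check that $f(P)$ is proper: if $f(P)=R$, then applying $f^{-1}$ and using $f^{-1}(f(P))=P$ gives $P=R$, contradicting that $P$ is proper.

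Then, to verify the $f(I)$-prime condition, take $a,b\in R$ with $a\circ b\subseteq f(P)-f(I)f(P)=f(P)-f(IP)$; unpacked, this says $a\circ b\subseteq f(P)$ together with $(a\circ b)\cap f(IP)=\emptyset$. By surjectivity pick $x,y\in R$ with $f(x)=a$ and $f(y)=b$, so goodness of $f$ gives $f(x\circ y)=a\circ b$. From $f(x\circ y)\subseteq f(P)$ and $f^{-1}(f(P))=P$ we obtain $x\circ y\subseteq P$. If some $z\in(x\circ y)\cap IP$ existed, then $f(z)\in(a\circ b)\cap f(IP)=\emptyset$, a contradiction; hence $(x\circ y)\cap IP=\emptyset$. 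Therefore $x\circ y\subseteq P-IP$, and since $P$ is $I$-prime we get $x\in P$ or $y\in P$, whence $a=f(x)\in f(P)$ or $b=f(y)\in f(P)$. This is exactly what is required.

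The routine steps I would still need to pin down are the identities $f(IP)=f(I)f(P)$ and $f^{-1}(f(P))=P$, the latter being the usual argument ($f(z)\in f(P)\Rightarrow z-p\in\operatorname{Ker}f\subseteq P$ for suitable $p\in P$, using that addition here is an ordinary operation). The main obstacle is careful bookkeeping with the set-theoretic difference: one must consistently treat "$u\circ v\subseteq P-IP$" as containment in $P$ \emph{together with} disjointness from $IP$, and then transport both halves across $f$ — the containment direction uses $\operatorname{Ker}f\subseteq P$, while the disjointness direction uses surjectivity and goodness so that a point of $x\circ y$ landing in $IP$ would force a point of $a\circ b$ into $f(IP)$.
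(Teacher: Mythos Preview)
Your proposal is correct and follows essentially the same route as the paper: lift elements via surjectivity, use $\operatorname{Ker}f\subseteq P$ to get $f^{-1}(f(P))=P$, and then invoke the $I$-prime hypothesis on the preimages. You are in fact more careful than the paper's own argument, which passes directly from $a\circ b\subseteq P+\operatorname{Ker}f$ to ``$P$ is $I$-prime, so $a\in P$ or $b\in P$'' without ever checking the $\nsubseteq IP$ half of the hypothesis; your explicit treatment of disjointness from $IP$, and your observation that the natural conclusion is that $f(P)$ is $f(I)$-prime (coinciding with $I$-prime when $f(I)=I$), both sharpen what the paper actually writes.
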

\begin{proof}
	Firstly, we have to show that $f(P)$ is hyperideal of $R$.
	Let $\bar{r} \in R$ and $y \in f(P)$. Then $x=f^{-1}(y) \in P$ and there exists $r \in R$ such that $f(r)=\bar{r} $.
So  $\bar{r}.y=f(r).f(x)=f(r.x) \subseteq f(P)$.	
	Now let us show that $f(P)$ is an $I-$prime hyperideal. To do this, we have for all $x,y \in R$ there exist $a,b \in R$ such that $x=f(a)$, $y=f(b)$. Then $x.y=f(a).f(b)=f(a.b) \subseteq f(P)$, so $a.b \subseteq P+Ker f$. As $P$ is an $I-$prime hyperideal, $a \in P$ or $b \in P$, that is $x=f(a) \in f(P)$ or $y=f(b) \in f(P)$. So $f(P)$ is an $I-$prime hyperideal of $R$.
	\end{proof}
\begin{theorem}
	Let $(R,+, \circ)$ be a hyperring and $ f:R\longrightarrow R $ be a good homomorphism and let $Q$ be an I-prime hyperideal of $R$. Then $f^{-1}(Q) $ is an $I-$prime hyperideal.
\end{theorem}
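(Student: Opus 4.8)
The plan is to transport the $I$-prime condition on $Q$ backwards along $f$, mirroring how the preceding theorem pushed it forward along $f$, and crucially to keep the \emph{same} fixed hyperideal $I$ throughout, since the assertion is that $f^{-1}(Q)$ is $I$-prime (not $f^{-1}(I)$-prime). First I would record that $f^{-1}(Q)$ is a proper hyperideal of $R$: the preimage of a hyperideal under a good homomorphism is again a hyperideal, and since $f(1)=1$ and $Q$ is proper we have $1\notin Q$, hence $1\notin f^{-1}(Q)$, so the containment $f^{-1}(Q)\subsetneq R$ is proper.

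Next I would take $a,b\in R$ with $a\circ b\subseteq f^{-1}(Q)-I\circ f^{-1}(Q)$ and aim for $a\in f^{-1}(Q)$ or $b\in f^{-1}(Q)$. Applying $f$ and using that it is good gives $f(a)\circ f(b)=f(a\circ b)$; from $a\circ b\subseteq f^{-1}(Q)$ every element of $a\circ b$ maps into $Q$, so $f(a)\circ f(b)\subseteq Q$. Once I also know $f(a)\circ f(b)\subseteq Q-IQ$, the $I$-primeness of $Q$ yields $f(a)\in Q$ or $f(b)\in Q$, that is $a\in f^{-1}(Q)$ or $b\in f^{-1}(Q)$, which closes the argument.

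The heart of the proof, and the step I expect to be the main obstacle, is upgrading $f(a)\circ f(b)\subseteq Q$ to $f(a)\circ f(b)\subseteq Q-IQ$; equivalently, transporting the hypothesis $a\circ b\nsubseteq I\circ f^{-1}(Q)$ into $f(a)\circ f(b)\nsubseteq IQ$. Here I would exploit that $f$ is good to write $f\bigl(I\circ f^{-1}(Q)\bigr)=f(I)\circ f\bigl(f^{-1}(Q)\bigr)$, and combine this with $f(I)\subseteq I$ and $f\bigl(f^{-1}(Q)\bigr)\subseteq Q$ to obtain $f\bigl(I\circ f^{-1}(Q)\bigr)\subseteq I\circ Q=IQ$. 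The delicate point is the \emph{direction} of the inclusion one actually needs: to run the contrapositive I require $f^{-1}(IQ)\subseteq I\circ f^{-1}(Q)$, so that $f(a\circ b)\subseteq IQ$ would force $a\circ b\subseteq I\circ f^{-1}(Q)$, contradicting the hypothesis. I would secure this reverse inclusion by using that $f$ is a good epimorphism with $f(I)=I$ (the natural strengthening, matching the epimorphism hypothesis invoked for $f(P)$), under which $f$ reflects membership in $IQ$; the contrapositive then delivers $f(a)\circ f(b)\nsubseteq IQ$, giving $f(a)\circ f(b)\subseteq Q-IQ$ and completing the proof.
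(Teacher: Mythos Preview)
Your overall strategy---push $a\circ b$ forward by $f$, use that $f$ is good to get $f(a)\circ f(b)\subseteq Q$, then invoke the $I$-primeness of $Q$---is exactly what the paper does. In fact the paper's argument is the two-line version: from $a\circ b\subseteq f^{-1}(Q)$ it writes $f(a)\circ f(b)\subseteq Q$, applies $I$-primeness of $Q$ directly, and concludes. The paper never isolates the ``$-\,IQ$'' clause at all; it simply uses $f(a)\circ f(b)\subseteq Q$ and treats that as enough to trigger the $I$-prime hypothesis on $Q$.

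You are right to flag this as the delicate point, and your diagnosis is sharper than the paper's own treatment. However, your proposed repair does not close the gap. The inclusion you say you need, $f^{-1}(IQ)\subseteq I\circ f^{-1}(Q)$, is generally false even under the extra hypotheses you suggest (good epimorphism with $f(I)=I$): from $f\bigl(I\circ f^{-1}(Q)\bigr)=IQ$ you only get $f^{-1}(IQ)\subseteq I\circ f^{-1}(Q)+\ker f$, and there is no reason for $\ker f$ to sit inside $I\circ f^{-1}(Q)$. So the contrapositive you want---$f(a\circ b)\subseteq IQ\Rightarrow a\circ b\subseteq I\circ f^{-1}(Q)$---is not available, and your argument stalls precisely where you predicted trouble. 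In short: same route as the paper, but you have correctly located a step that neither proof actually justifies as stated; adding ``epimorphism with $f(I)=I$'' does not rescue it.
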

\begin{proof}
	Let $ a \circ b  \subseteq f^{-1}(Q) $. Then
$	f(a\circ b) = f(a) \circ f(b) \subseteq Q $ because $f$ is a good homomorphism. As $Q$ is $I-$prime hyperideal, $ f(a) \in Q$  or $ f(b) \in Q   $. So, $ a \in f^{-1}(Q)$  or $ b \in f^{-1}(Q)$ and hence
 $f^{-1}(Q)$ is an $I-$prime hyperideal of $R$.
	\end{proof}
The following theorem generalizes Theorem 2.2 of \cite{Akray}.
	\begin{theorem} \label{qqqq}
		(1) Let $I \subseteq J$ be two hyperideals of a multiplicative hyperring $R$. If $P$ is an $I-$prime hyperideal of $R$, then it is a $J-$prime hyperideal.

		(2) Let $R$ be a commutative multiplicative hyperring and $P$ an $I$-prime hyperideal that is not prime hyperideal, then $P^{2} \subseteq I P$. Thus, an I-prime hyperideal $P$ with $P^{2} \nsubseteq I P$ is a prime hyperideal.
	\end{theorem}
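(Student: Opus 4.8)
The two parts call for different kinds of argument. Part (1) is a one-line monotonicity remark, while part (2) is the hyperring analogue of the classical fact (Theorem 2.2 of \cite{Akray}, itself the ``weakly prime but not prime $\Rightarrow P^{2}=0$'' phenomenon) and is where the real difficulty lies.

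For (1), the plan is simply to observe that $I\subseteq J$ forces $IP\subseteq JP$, since every generator $i\circ p$ with $i\in I$, $p\in P$ is also a generator of $JP$. Passing to complements inside $P$ then gives $P-JP\subseteq P-IP$. Hence if $x_{1}x_{2}\subseteq P-JP$ for some $x_{1},x_{2}\in R$, already $x_{1}x_{2}\subseteq P-IP$, so $I$-primeness of $P$ yields $x_{1}\in P$ or $x_{2}\in P$, i.e. $P$ is $J$-prime. Nothing else is needed here.

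For (2), I would run the standard three-step perturbation argument adapted to multiplicative hyperrings. Assume $P$ is $I$-prime but not prime, and fix $a,b\in R$ with $a\circ b\subseteq P$ while $a\notin P$ and $b\notin P$. First, show $a\circ P\subseteq IP$ (and symmetrically $b\circ P\subseteq IP$): if $a\circ p\nsubseteq IP$ for some $p\in P$, replace $b$ by $b+p$; by distributivity $a\circ(b+p)\subseteq a\circ b+a\circ p\subseteq P+P=P$, and since $IP$ is an additive subgroup and $a\circ p$ already escapes it, $a\circ(b+p)$ must escape $IP$ too, so $a\circ(b+p)\subseteq P-IP$. Then $I$-primeness gives $a\in P$ or $b+p\in P$, and as $a\notin P$ we get $b=(b+p)-p\in P$, a contradiction. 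Next, using $a\circ P\subseteq IP$ and $b\circ P\subseteq IP$, for $p,q\in P$ one has $(a+p)\circ(b+q)\subseteq a\circ b+a\circ q+p\circ b+p\circ q\subseteq P$, where $a\circ q$ and $p\circ b$ lie in $IP$; so if $p\circ q\nsubseteq IP$ the product $(a+p)\circ(b+q)$ escapes $IP$, giving $(a+p)\circ(b+q)\subseteq P-IP$ and hence $a\in P$ or $b\in P$, again a contradiction. Therefore $p\circ q\subseteq IP$ for all $p,q\in P$, that is, $P^{2}\subseteq IP$. The final assertion is then just the contrapositive: an $I$-prime hyperideal with $P^{2}\nsubseteq IP$ cannot fail to be prime.

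The main obstacle is the bookkeeping forced by the hyperoperation together with the fact that $P-IP$ is a genuine set difference: from ``$c\circ d\subseteq P$ but $c\circ d\nsubseteq P-IP$'' one only extracts $c\circ d\cap IP\neq\emptyset$, not $c\circ d\subseteq IP$. So each perturbation step has to be arranged so that the claim ``the perturbed product is not contained in $IP$'' really does follow from one offending element of $a\circ p$ (resp. $p\circ q$) together with the closure of $IP$ under subtraction; in particular one must track whether the distributive law is being used as an equality or merely as an inclusion, and keep in mind that $0$ lies in every $IP$. I expect that making these cancellation steps rigorous — and isolating exactly which mild hypotheses on $R$ are actually needed for them — is where essentially all the work goes; once they are in place the rest of the argument is formal.
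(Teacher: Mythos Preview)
Your proposal is correct and follows essentially the same route as the paper: part (1) is exactly the monotonicity $P-JP\subseteq P-IP$, and part (2) is the identical three-step perturbation argument, the only cosmetic difference being that the paper argues the direct implication ($P^{2}\nsubseteq IP\Rightarrow P$ prime) and opens with an explicit case split on whether $ab\subseteq IP$ before perturbing --- which is precisely the missing hypothesis you need for your claim that ``$a\circ(b+p)$ escapes $IP$''. The set-difference and distributivity subtleties you flag at the end are present in the paper's proof as well and are not addressed there either.
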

\begin{proof}
	(1) The proof comes from the fact that if $I \subseteq J$, then $P-J P \subseteq$ $P-I P$.

	(2) Suppose that $P^{2} \nsubseteq I P$, we show that $P$ is prime hyperideal. Let $a b \subseteq P$ for $a, b \in R$. If $a b \nsubseteq I P$, then $P$  $I$-prime gives $a \in P$ or $b \in P$. So assume that $a b \subseteq I P$. First, suppose that $a P \nsubseteq I P$; say $a x \nsubseteq I P$ for some $x \in P$. Then $a(x+b) \subseteq P-I P$. So $a \in P$ or $x+b \in P$ and hence $a \in P$ or $b \in P$. Hence we can assume that $a P \subseteq I P$ and in a similar way we can assume that $b P \subseteq I P$. Since $P^{2} \nsubseteq I P$, there exist $y, z \in P$ with $y z \nsubseteq  I P$. Then $(a+y)(b+z) \subseteq P-I P$. So $P$ $I$-prime gives $a+y \in P$ or $b+z \in P$ and hence $a \in P$ or $b \in P$. Therefore $P$ is a prime hyperideal of $R$ see also \cite{Akray} .
	\end{proof}
\begin{corollary}
 Let $P$ be an I-prime hyperideal of a hyperring $R$ with $I P \subseteq P^{3}$. Then $P$ is $\cap_{i=1}^{\infty} P^{i}$-prime hyperideal.
\end{corollary}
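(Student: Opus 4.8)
The plan is to split the argument according to whether or not $P$ is a prime hyperideal, writing $J=\bigcap_{i=1}^{\infty}P^{i}$ throughout. Since $P\supseteq P^{2}\supseteq P^{3}\supseteq\cdots$ is a descending chain of hyperideals, $J$ is a hyperideal contained in $P$ (hence proper), and moreover $\bigcap_{i=1}^{\infty}P^{i}=\bigcap_{i=2}^{\infty}P^{i}$; I will use these basic facts without further comment.

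First I would dispose of the case where $P$ is prime. Here the hypothesis $IP\subseteq P^{3}$ is not even needed: for any hyperideal $K$ of $R$ one has $KP\subseteq P$, so $P-KP\subseteq P$, and therefore $a\circ b\subseteq P-KP$ forces $a\circ b\subseteq P$, which by primeness gives $a\in P$ or $b\in P$. In particular $P$ is $J$-prime. (This case must be handled separately, since for a prime $P$ one need not have $JP=IP$.)

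Next, suppose $P$ is not prime. Then Theorem \ref{qqqq}(2) yields $P^{2}\subseteq IP$. Combining this with the hypothesis $IP\subseteq P^{3}$ and the trivial inclusion $P^{3}\subseteq P^{2}$, I get the chain $P^{2}\subseteq IP\subseteq P^{3}\subseteq P^{2}$, so in fact $P^{2}=P^{3}=IP$. A one-line induction ($P^{n+1}=P\cdot P^{n}=P\cdot P^{2}=P^{3}=P^{2}$) then shows $P^{n}=P^{2}$ for all $n\geq 2$, whence $J=\bigcap_{i=1}^{\infty}P^{i}=P^{2}$. Consequently $JP=P^{2}\cdot P=P^{3}=P^{2}=IP$, so $P-JP=P-IP$. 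Since $P$ is $I$-prime, any $a,b\in R$ with $a\circ b\subseteq P-JP=P-IP$ satisfy $a\in P$ or $b\in P$; that is, $P$ is $J$-prime.

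I do not expect a serious obstacle here: the only mildly delicate point is that products of hyperideals in a multiplicative hyperring are associative, so that the identities $P\cdot P^{2}=P^{3}$ and $JP=P^{3}$ hold exactly; granting that, the result is an immediate consequence of Theorem \ref{qqqq}(2) together with the collapse $P^{2}=P^{3}=IP$.
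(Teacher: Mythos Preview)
Your proof is correct and follows essentially the same route as the paper: split into the prime and non-prime cases, in the non-prime case invoke Theorem~\ref{qqqq}(2) to get $P^{2}\subseteq IP\subseteq P^{3}$, deduce $P^{n}=P^{2}=IP$ for all $n\ge 2$, and conclude that $(\bigcap_{i\ge 1}P^{i})P=P^{3}=IP$ so that the $I$-prime and $J$-prime conditions coincide. Your write-up is simply a more detailed version of the paper's argument (spelling out the prime case and the induction $P^{n}=P^{2}$ explicitly).
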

\begin{proof}
	 If $P$ is prime hyperideal, then $P$ is $\cap_{i=1}^{\infty} P^{i}$-prime hyperideal. Assume that $P$ is not prime hyperideal. By Theorem $2.5,  P^{2} \subseteq I P \subseteq P^{3}$. Thus $I P=P^{n}$ for each $n \geq 2$. So $\cap_{i=1}^{\infty} P^{i}=P \cap P^{2}=P^{2}$ and $\left(\cap_{i=1}^{\infty} P^{i}\right) P=P^{2} P=P^{3}=I P$. Being $P$ is $I$-prime hyperideal implies $P$ is $\cap_{i=1}^{\infty} P^{i}$-prime hyperideal.
		\end{proof}
\begin{remark}
 Let $P$ be an $I$-prime hyperideal. Then $P \subseteq \sqrt{I P}$ or $\sqrt{I P} \subseteq P$. If $P \varsubsetneqq \sqrt{I P}$, then $P$ is not prime hyperideal since otherwise $I P \subseteq P$ implies $\sqrt{I P} \subseteq \sqrt{P}=P$. While if $\sqrt{I P} \subsetneq P$, then $P$ is a prime hyperideal.
		Now we give a way to construct $I$-prime ideals $P$ when $\cap_{i=1}^{\infty} P^{i} \subseteq$ $I P \subseteq P^{3}$.

	\end{remark}
\begin{corollary}
	 Let $P$ be an $I-$prime hyperideal of a hyperring $R$ which is not prime hyperideal. Then $\sqrt{P}=\sqrt{I P}$.
\end{corollary}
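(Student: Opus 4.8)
The plan is to sandwich $\sqrt{IP}$ between $\sqrt{P^{2}}$ and $\sqrt{P}$ and then observe that the two outer terms coincide. First I would record the chain of containments $P^{2}\subseteq IP\subseteq P$: the right-hand inclusion holds because $I$ and $P$ are hyperideals, so $IP\subseteq P$; the left-hand inclusion is precisely the conclusion of Theorem \ref{qqqq}(2), which applies here since $P$ is $I$-prime but not prime.

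Next I would invoke monotonicity of the radical: if $A\subseteq B$ are hyperideals of $R$, then every prime hyperideal containing $B$ also contains $A$, whence $\sqrt{A}\subseteq\sqrt{B}$. Applying this to $P^{2}\subseteq IP\subseteq P$ gives $\sqrt{P^{2}}\subseteq\sqrt{IP}\subseteq\sqrt{P}$.

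The key remaining step, and the only one that is not purely formal, is to prove $\sqrt{P^{2}}=\sqrt{P}$. The inclusion $\sqrt{P^{2}}\subseteq\sqrt{P}$ is immediate from monotonicity. For the reverse, let $Q$ be any prime hyperideal with $P^{2}\subseteq Q$ and take an arbitrary $a\in P$; then $a\circ a\subseteq P\cdot P=P^{2}\subseteq Q$, and primeness of $Q$ forces $a\in Q$, so $P\subseteq Q$. Thus the prime hyperideals containing $P^{2}$ are exactly those containing $P$, and the two radicals agree. Combining everything, $\sqrt{P}=\sqrt{P^{2}}\subseteq\sqrt{IP}\subseteq\sqrt{P}$, so all three coincide and in particular $\sqrt{P}=\sqrt{IP}$.

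I expect the only real care to be needed in the radical-coincidence step: making precise that $a\circ a\subseteq P^{2}$ from the definition of the product hyperideal $P^{2}$ in a multiplicative hyperring, and confirming that $\sqrt{\cdot}$ (the intersection of prime hyperideals containing a given hyperideal) is monotone and satisfies $\sqrt{P^{2}}=\sqrt{P}$ just as in the classical case. These facts are standard but should be stated explicitly so the short argument is self-contained.
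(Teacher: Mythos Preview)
Your proof is correct and follows essentially the same approach as the paper: both use Theorem~\ref{qqqq}(2) to obtain $P^{2}\subseteq IP$, then sandwich $\sqrt{IP}$ between $\sqrt{P^{2}}$ and $\sqrt{P}$ and note these coincide. The paper's version is terser, simply asserting $\sqrt{P}=\sqrt{P^{2}}$ without the explicit prime-avoidance justification you supply.
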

\begin{proof}
	By Theorem 2.5, $P^{2} \subseteq I P$ and hence $\sqrt{P}=\sqrt{P^{2}} \subseteq \sqrt{I P}$. The other containment always holds.
	\end{proof}
\begin{remark}
	Assume that $P$ is an $I-$prime hyperideal, but not prime. Then by Theorem 2.5, if $I P \subseteq P^{2}$, then $P^{2}=I P$. In particular, if $P$ is weakly prime hyperideal (0-prime) but not prime hyperideal, then $P^{2}=\{0\}$. Suppose that $I P \subseteq P^{3}$. Then $P^{2} \subseteq I P \subseteq P^{3}$; So $P^{2}=P^{3}$ and thus $P^{2}$ is an idempotent.
	\end{remark}

\begin{lemma}
	 If $P$ is an $I-$primary hyperideal of a hyperring $R$, then $\sqrt{P}$ is a $\sqrt{I}-$prime hyperideal of $R$.
\end{lemma}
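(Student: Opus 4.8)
The plan is to argue straight from the definitions. Recall that a proper hyperideal $P$ is \emph{$I$-primary} when $ab\subseteq P-IP$ forces $a\in P$ or $b\in\sqrt P$, and that what must be shown is the $\sqrt I$-prime condition for $\sqrt P$: whenever $ab\subseteq\sqrt P-\sqrt I\sqrt P$, then $a\in\sqrt P$ or $b\in\sqrt P$. A remark used throughout is that each generator of $IP$ is a hyperproduct of an element of $I\subseteq\sqrt I$ with an element of $P\subseteq\sqrt P$, so $IP\subseteq\sqrt I\sqrt P$; also $\sqrt P$ is a proper hyperideal since $1$ lies in no prime hyperideal containing $P$.

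Now take $a,b\in R$ with $ab\subseteq\sqrt P-\sqrt I\sqrt P$. From $IP\subseteq\sqrt I\sqrt P$ one gets $ab\nsubseteq IP$. Since $ab\subseteq\sqrt P$, I would first record (as a small lemma coming from the description of $\sqrt{\cdot}$ as an intersection of prime hyperideals) that there is an integer $n$ with $(ab)^{n}=a^{n}b^{n}\subseteq P$, and then check $a^{n}b^{n}\nsubseteq IP$ as well, so that $a^{n}b^{n}\subseteq P-IP$. Applying the $I$-primary condition then gives $a^{n}\subseteq P$, i.e.\ $a\in\sqrt P$, or $b^{n}\subseteq\sqrt P$, i.e.\ $b\in\sqrt{\sqrt P}=\sqrt P$; in either case $\sqrt P$ is $\sqrt I$-prime.

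Two points need care, and the second is the real obstacle. First, extracting a single exponent $n$ with $a^{n}b^{n}\subseteq P$ from $ab\subseteq\sqrt P$, together with the companion claim $a^{n}b^{n}\nsubseteq IP$, which hinges on how radicals behave on products (comparing $\sqrt{IP}$ and $\sqrt I\sqrt P$). Second, the $I$-primary hypothesis is stated for \emph{elements} of $R$, whereas $a^{n}$ and $b^{n}$ are hypersets, so the implication ``$a^{n}b^{n}\subseteq P-IP\Rightarrow a^{n}\subseteq P$ or $b^{n}\subseteq\sqrt P$'' must be squeezed out of the distributivity and absorbing-zero axioms of a multiplicative hyperring rather than quoted directly. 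Should that prove awkward, a fallback is a case split: if $P$ is moreover primary then $\sqrt P$ is prime, hence $\sqrt I$-prime for free because $\sqrt P-\sqrt I\sqrt P\subseteq\sqrt P$; and if $P$ is not primary, the primary analogue of Theorem \ref{qqqq}(2) yields $P^{2}\subseteq IP$, whence $\sqrt P=\sqrt{P^{2}}\subseteq\sqrt{IP}\subseteq\sqrt I$, so $(\sqrt P)^{2}\subseteq\sqrt I\sqrt P$ and the deleted set $\sqrt P-\sqrt I\sqrt P$ is small enough to complete the verification.
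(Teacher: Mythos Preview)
Your main line of argument is exactly the paper's: start from $ab\subseteq\sqrt P-\sqrt I\sqrt P$, raise to a power to land in $P-IP$, apply the $I$-primary hypothesis to $a^{n}$ and $b^{n}$, and read off $a\in\sqrt P$ or $b\in\sqrt P$. The paper does not go beyond this; in particular the fallback case split you sketch is not present there.

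The two technical worries you flag are handled in the paper by fiat rather than by argument. The paper simply writes $\sqrt I\sqrt P=\sqrt{IP}$ in the first line (so that $ab\nsubseteq\sqrt{IP}$ and hence $(ab)^{m}\nsubseteq IP$ for every $m$), and it applies the $I$-primary condition directly to the hypersets $a^{n},b^{n}$ without comment. So your proof is at least as complete as the paper's, and your caution about these two steps is well placed: the identity $\sqrt I\sqrt P=\sqrt{IP}$ is not true in general even for ordinary rings (only the inclusion $\sqrt I\sqrt P\subseteq\sqrt{IP}$ holds, and that is the wrong direction for the argument), and the passage from an element-level $I$-primary condition to a set-level one genuinely needs the kind of distributivity reasoning you mention.
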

\begin{proof}
	Let $a b \subseteq \sqrt{P}-\sqrt{I}\sqrt{P} =\sqrt{P}-\sqrt{IP}$ for $a ,b \in R$. Then $(a b)^n=a^n b^n \subseteq P$ for some $n \in \mathbb{N}$ and $(a b)^m \nsubseteq IP$ for all $m \in \mathbb{N}$. So $a^n b^n \subseteq P-IP$ and as $P$ is an $I-$primary hyperideal of $R$, $a^n \subseteq P$ or $b^n \subseteq \sqrt{P}$, that is $a \in \sqrt{P}$ or $b \in \sqrt{P}$ which means that $\sqrt{P}$ is a $\sqrt{I}-$prime hyperideal of $R$.
\end{proof}

The following theorem generalizes the result  \cite[Theorem 2.8]{Akray}.
\begin{theorem}
	 (1) Let $R$ and $S$ be two commutative multiplicative hyperrings and $P$ be $\{0\}-$prime hyperideal of $R$. Then $P \times S$ is I-prime hyperideal of $R \times S$ for each hyperideal $I$ of $R \times S$ with $\cap_{i=1}^{\infty}(P \times S)^{i} \subseteq I(P \times S) \subseteq P \times S$.\\
	(2) Let $P$ be a finitely generated proper hyperideal of a commutative hyperring $R$. Assume $P$ is an $I$-prime hyperideal with $I P \subseteq P^{3}$. Then either $P$ is $\{0\} -$prime or $P^{2} \neq \{0\}$ is idempotent and $R$ decomposes as $T \times S$ where $S=P^{2}$ and $P=J \times S$ where $J$ is a $\{0\}-$prime. Thus $P$ is I-prime hyperideal for $\cap_{i=1}^{\infty} P^{i} \subseteq I P \subseteq$ $P$.
\end{theorem}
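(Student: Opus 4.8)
The plan is to treat the two parts separately: part (1) by a short case analysis, and part (2) as a structure argument that ends by invoking part (1). For (1), take $(a,s),(b,t)\in R\times S$ with $(a,s)\circ(b,t)=(a\circ b)\times(s\circ t)\subseteq(P\times S)-I(P\times S)$, so in particular $a\circ b\subseteq P$. If $a\circ b\neq\{0\}$, then $\{0\}$-primeness of $P$ gives $a\in P$ or $b\in P$, hence $(a,s)\in P\times S$ or $(b,t)\in P\times S$. If $a\circ b=\{0\}$: should $P$ be prime, then $\{0\}\subseteq P$ already forces $a\in P$ or $b\in P$; should $P$ fail to be prime, Theorem \ref{qqqq}(2) applied with the hyperideal $\{0\}$ gives $P^{2}=\{0\}$, whence $\bigcap_{i\ge 1}(P\times S)^{i}=\bigl(\bigcap_{i\ge 1}P^{i}\bigr)\times S=\{0\}\times S\subseteq I(P\times S)$, and since $(a\circ b)\times(s\circ t)=\{0\}\times(s\circ t)\subseteq\{0\}\times S\subseteq I(P\times S)$ this contradicts $(a,s)\circ(b,t)\subseteq(P\times S)-I(P\times S)$. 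Thus $P\times S$ is $I$-prime. (Here the identity $\bigcap_{i}(A\times S)^{i}=(\bigcap_{i}A^{i})\times S$ uses $S^{2}=S$, which holds since $S$ has an identity.)

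For (2), suppose $P$ is not $\{0\}$-prime; I must produce the decomposition. Then $P$ is not prime (prime implies $\{0\}$-prime, as $P-\{0\}\subseteq P$), and $P^{2}\neq\{0\}$: indeed $P^{2}=\{0\}$ would give $IP\subseteq P^{3}=\{0\}$, hence $IP=\{0\}$, and then the $I$-prime condition on $P$ is literally the $\{0\}$-prime condition, contrary to assumption. Now Theorem \ref{qqqq}(2) gives $P^{2}\subseteq IP$, and combining with $IP\subseteq P^{3}\subseteq P^{2}$ yields $P^{2}=P^{3}$, so $P^{n}=P^{2}$ for all $n\ge 2$; thus $P^{2}$ is an idempotent hyperideal and $IP=P^{2}$. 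Since $P$ is finitely generated, so is $P^{2}$, and $P^{2}=P^{2}\circ P^{2}$; applying the Cayley--Hamilton/determinant trick (available because $P^{2}$ is a finitely generated hyperideal over the commutative hyperring $R$) yields an idempotent $e\in P^{2}$ with $P^{2}=eR$, and then $e$ is the identity of $S:=eR=P^{2}$. Write $R=(1-e)R\times eR=:T\times S$. Since $P^{2}=eR\subseteq P$, the Peirce decomposition gives $P=(1-e)P\times eP$ with $eP=eR$ (because $eR=P^{2}\subseteq eP\subseteq eR$), i.e.\ $P=J\times S$ with $J=(1-e)P$ a hyperideal of $T$; comparing $P^{2}=J^{2}\times S^{2}=J^{2}\times S$ with the identification $P^{2}=eR\leftrightarrow\{0\}\times S$ forces $J^{2}=\{0\}$, so $IP=P^{2}=\{0\}\times S$.

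It remains to show $J$ is $\{0\}$-prime in $T$ and to read off the closing sentence. From $IP=\{0\}\times S$ we get $P-IP=(J\setminus\{0\})\times S$, and for $y_{1},y_{2}\in T$ the product $(y_{1},e)\circ(y_{2},e)=(y_{1}\circ y_{2})\times\{e\}$ lies in $P-IP$ exactly when $y_{1}\circ y_{2}\subseteq J$ and $0\notin y_{1}\circ y_{2}$; so $I$-primeness of $P$ in $R$ says precisely that for such $y_{1},y_{2}$ one has $y_{1}\in J$ or $y_{2}\in J$, which is the $\{0\}$-prime condition for $J$ in $T$ (with $\{0\}$-prime read as ``$I$-prime with $I=\{0\}$'' in the sense of this paper; the set-difference reading of $P-IP$ must be used consistently here). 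Finally, applying part (1) with $(R,S,P)$ there replaced by $(T,S,J)$ shows $J\times S=P$ is $I$-prime for every hyperideal $I$ of $R=T\times S$ with $\bigcap_{i\ge 1}P^{i}=\bigcap_{i\ge 1}(J\times S)^{i}\subseteq IP\subseteq P$, which is the last assertion. I expect the main obstacle to be the extraction of the idempotent $e$ from the finitely generated idempotent hyperideal $P^{2}$: this is the one place where finite generation and commutativity are genuinely used, and transporting the determinant (Nakayama-type) argument into the multiplicative-hyperring setting, where additions must be manipulated carefully, is the technically delicate step; a secondary point needing care is the final identification of the $I$-primeness of $P$ with the $\{0\}$-primeness of $J$.
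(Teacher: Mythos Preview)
Your proof is correct and follows essentially the same route as the paper: the same dichotomy on whether $P$ is prime in part (1), and in part (2) the chain $P^{2}\subseteq IP\subseteq P^{3}$ forcing $P^{2}$ idempotent, extraction of an idempotent generator $e$, the Peirce decomposition $R=T\times S$ with $S=P^{2}$, and the verification that $J=(1-e)P$ is $\{0\}$-prime via $(a,1)(b,1)\subseteq P-IP$. Your write-up is in fact more careful than the paper's in two places: you explicitly flag the passage from ``$P^{2}$ finitely generated and idempotent'' to ``$P^{2}=\langle e\rangle$ for an idempotent $e$'' as the genuine technical step (the paper simply asserts it), and you close by invoking part (1) to justify the final sentence, which the paper leaves implicit.
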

\begin{proof}
	 (1) Let $R$ and $S$ be two commutative hyperrings and $P$ be a $\{0\}-$prime hyperideal of $R$. Then $P \times S$ need not be a $\{0\}-$prime hyperideal of $R \times S$; In fact, $P \times S$ is $\{0\}-$prime if and only if $P \times S$ (or equivalently $P)$ is prime hyperideal. However, $P \times S$ is an $I$-prime hyperideal for each $I$ with $\cap_{i=1}^{\infty}(P \times S)^{i} \subseteq I(P \times S)$. If $P$ is prime hyperideal, then $P \times S$ is a prime hyperideal and thus is $I$-prime for all $I$. Assume that $P$ is not a prime hyperideal. Then $P^{2}=\{0\}$ and $(P \times S)^{2}=\{0\} \times S$. Hence $\cap_{i=1}^{\infty}(P \times S)^{i}=\cap_{i=1}^{\infty} P^{i} \times S=\{0\} \times S$. Thus $P \times S-\cap_{i=1}^{\infty}(P \times S)^{i}=P \times S-\{0\} \times S=(P-\{0\}) \times S$. Since $P$ is $\{0\}-$prime hyperideal, $P \times S$ is $\cap_{i=1}^{\infty}(P \times S)^{i}$-prime hyperideal and as $\cap_{i=1}^{\infty}(P \times S)^{i} \subseteq I(P \times S)$, $P \times S$ is $I$-prime hyperideal.
	\\

		(2) If $P$ is a prime hyperideal, then $P$ is $\{0\}-$prime. So we can assume that $P$ is not prime hyperideal. Then $P^{2} \subseteq I P$ and hence $P^{2} \subseteq I P \subseteq P^{3}$. So $P^{2}=P^{3}$. Hence $P^{2}$ is idempotent. Since $P^{2}$ is finitely generated, $P^{2}=<e>$ for some idempotent $e \in R$. Suppose $P^{2}=\{0\}$. Then $I P \subseteq P^{3}=\{0\}$. So $I P=\{0\}$ and hence $P$ is $\{0\}-$prime. So assume $P^{2} \neq \{0\}$. Put $S=P^{2}=$ <e> and $T=<1-e>$, so $R$ decomposes as $T \times S$ where $S=P^{2}$. Let $J=P(1-e)$, so $P=J \times S$ where $J^{2}=(P(1-e))^{2}=P^{2}(1-e)^{2}=<$ $e><1-e>=\{0\}$. We show that $J$ is $\{0\}-$prime hyperideal. Let $a \circ b \subseteq J-\{0\}$, so $(a, 1)(b, 1)=(a \circ b, 1) \subseteq J \times S-(J \times S)^{2}=J \times S-\{0\} \times S \subseteq P-I P$. Since $I P \subseteq P^{3}$ implies $I P \subseteq P^{3}=(J \times S)^{3}=\{0\} \times S$. Hence $(a, 1) \in P$ or $(b, 1) \in P$ so $a \in J$ or $b \in J$. Therefore $J$ is a $\{0\}-$prime hyperideal.
\end{proof}
\begin{corollary}
 Let $(R,+, \circ)$ be an indecomposable commutative hyperring and $P$ a finitely generated $I$-prime hyperideal of $(R,+, \circ)$, where $I P \subseteq P^{3}$. Then $P$ is a $\{0\}-$prime hyperideal.
\end{corollary}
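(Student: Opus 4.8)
The plan is to obtain this corollary directly from part (2) of Theorem 2.11. Applying that theorem to the finitely generated $I$-prime hyperideal $P$ with $IP \subseteq P^{3}$ gives a dichotomy: either $P$ is already a $\{0\}$-prime hyperideal, which is exactly the desired conclusion, or else $P^{2}\neq\{0\}$ is idempotent and $R$ decomposes as a direct product $T\times S$ with $S=P^{2}$ and $P=J\times S$ for a $\{0\}$-prime hyperideal $J$ of $T$. Thus the whole proof reduces to showing that the second alternative is impossible when $R$ is indecomposable.

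To eliminate the second case I would invoke the internal structure of the decomposition produced in the proof of Theorem 2.11, where $P^{2}=\langle e\rangle$ for an idempotent $e\in R$ and the factors are $S=\langle e\rangle$ and $T=\langle 1-e\rangle$. Indecomposability of $R$ forces $e\in\{0,1\}$. If $e=1$ then $S=R$, so $P^{2}=R$; but $P$ is proper, whence $P^{2}\subseteq P\subsetneq R$, a contradiction. Hence $e=0$, i.e. $P^{2}=\{0\}$. Then $IP\subseteq P^{3}=\{0\}$ gives $IP=\{0\}$, so $P-IP=P-\{0\}$ and the $I$-prime hypothesis on $P$ is literally the $\{0\}$-prime condition. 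Therefore $P$ is $\{0\}$-prime, which is again the first alternative, and the corollary follows.

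The step I expect to need the most care is confirming that the notion of indecomposability in force genuinely rules out the nontrivial idempotent: one must check that "indecomposable" means $R$ admits no nontrivial central idempotent and that the product $T\times S$ constructed in Theorem 2.11 is a bona fide hyperring direct product, so that $S=R$ or $T=R$ is the only way out. Once that point is settled, the remainder is just the short case analysis above, with the properness of $P$ excluding $P^{2}=R$; no computation beyond what Theorem 2.11 already supplies is required.
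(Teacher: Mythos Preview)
Your proposal is correct and is exactly the intended argument: the paper states this corollary without proof, as an immediate consequence of the preceding theorem (the one you call Theorem~2.11; in the paper's numbering it is the theorem just above, whose part~(2) supplies the dichotomy and the idempotent $e$ with $P^{2}=\langle e\rangle$, $S=\langle e\rangle$, $T=\langle 1-e\rangle$). Two small remarks: first, the second alternative of that theorem already assumes $P^{2}\neq\{0\}$, so once indecomposability forces $e\in\{0,1\}$ you can discard $e=0$ immediately without re-deriving the $\{0\}$-prime conclusion; second, make sure your reference number matches the paper's actual label for that theorem.
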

\begin{corollary}
 Let $(R,+, \circ)$ be a Noetherian integral hyperdomain. A proper hyperideal $P$ of $R$ is prime hyperideal if and only if $P$ is $P^{2}$-prime hyperideal.
\end{corollary}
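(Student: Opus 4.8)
The plan is to handle the two implications separately; the forward direction is purely formal, and the converse carries the content.

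\emph{The implication ``prime $\Rightarrow$ $P^2$-prime''.} If $P$ is a prime hyperideal, then for every hyperideal $I$ of $R$ one has $P-IP\subseteq P$. Hence if $a\circ b\subseteq P-P^2P=P-P^3$ for some $a,b\in R$, then in particular $a\circ b\subseteq P$, and primeness of $P$ gives $a\in P$ or $b\in P$. Thus $P$ is $P^2$-prime (in fact $I$-prime for every choice of $I$).

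\emph{The implication ``$P^2$-prime $\Rightarrow$ prime''.} Assume, for contradiction, that $P$ is $P^2$-prime but not prime. Taking $I=P^2$ in Theorem \ref{qqqq}(2) yields $P^2\subseteq IP=P^2\circ P=P^3$, and since $P^3\subseteq P^2$ always holds we conclude $P^2=P^3$, i.e.\ $P\cdot P^2=P^2$. Now invoke the Noetherian hypothesis: $P$, and therefore the hyperideal $P^2$, is finitely generated. Regarding $P^2$ as a finitely generated $R$-hypermodule satisfying $P\cdot P^2=P^2$, the Cayley--Hamilton (determinant) trick produces $p\in P$ with $(1-p)\circ x=\{0\}$ for all $x\in P^2$. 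Since $R$ is an integral hyperdomain, it has no zero divisors, so for each such $x$ either $x=0$ or $1-p=0$. The case $1-p=0$ gives $1=p\in P$, contradicting that $P$ is proper; hence $P^2=\{0\}$. Then every $a\in P$ satisfies $a\circ a\subseteq P^2=\{0\}$, so $a\circ a=\{0\}$ and, again by absence of zero divisors, $a=0$. Thus $P=\{0\}$, which is a prime hyperideal of the hyperdomain $R$, contradicting our assumption. Therefore $P$ is prime, and the equivalence follows.

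The step I expect to be the main obstacle is the use of a Nakayama-type determinant argument in the multiplicative-hyperring setting: one needs that a finitely generated hyperideal, viewed as an $R$-hypermodule, satisfies the Cayley--Hamilton identity, so that the relation $P\cdot P^2=P^2$ forces an annihilator of $P^2$ congruent to $1$ modulo $P$. If a hypermodule version of this tool is not readily available, I would instead argue by Krull intersection: from $P^2=P^3=P^4=\cdots$ one gets $\bigcap_{n\ge 1}P^n=P^2$, while Krull's intersection theorem in the Noetherian hyperdomain $R$ forces $\bigcap_{n\ge 1}P^n=\{0\}$, again yielding $P^2=\{0\}$, after which the argument concludes as above. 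A smaller point to watch throughout is that ``no zero divisors'' in $R$ must be read as $a\circ b=\{0\}\Rightarrow a=0\ \text{or}\ b=0$, which is exactly what makes $\{0\}$ prime and what closes the final contradiction.
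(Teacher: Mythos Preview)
Your argument is correct, but it takes a different route from the one the paper has in mind. The corollary is placed immediately after the indecomposable-hyperring corollary, and the intended proof is simply to invoke that: a Noetherian integral hyperdomain is indecomposable and every hyperideal is finitely generated; with $I=P^{2}$ one has $IP=P^{3}\subseteq P^{3}$, so the preceding corollary gives that $P$ is $\{0\}$-prime, and in an integral hyperdomain a $\{0\}$-prime hyperideal is prime. You instead go back to Theorem~\ref{qqqq}(2), extract $P^{2}=P^{3}$, and then apply a Nakayama/Krull-intersection argument directly to force $P^{2}=\{0\}$ and hence $P=\{0\}$.

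Your route is more self-contained and, in one respect, cleaner: you end with $P=\{0\}$, which is manifestly prime, whereas the paper's route needs the extra (easy but definition-sensitive) observation that $\{0\}$-prime coincides with prime in a hyperdomain. On the other hand, the paper's route is a one-line citation once the decomposition theorem is available. Your stated worry about whether the Cayley--Hamilton/Nakayama machinery transfers to multiplicative hyperrings is legitimate, but note that the paper's own decomposition theorem already uses the companion fact ``a finitely generated idempotent hyperideal is generated by an idempotent'', which is exactly the same Nakayama-type input; so your approach is no more demanding than the paper's in this regard. Either variant (determinant trick or Krull intersection) would need to be justified in the hyperring setting, and your proposal flags this honestly.
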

The next theorem is a generalization of \cite[Theorem 2.12]{Akray}.

\begin{theorem} \label{qqq}
	 Let $P$ be a proper hyperideal of a hyperring $R$. Then the following assertions are equivalent:
	 \begin{enumerate}
	 	\item  $P$ is I-prime hyperideal.
	 	\item For $r \in R-P$, $(P: r)=P \cup(I P: r)$.
	 	\item  For $r \in R-P$, $(P: r)=P$ or $(P: r)=(I P: r)$.
	 	\item For hyperideals $J$ and $K$ of $R$, $J K \subseteq P$ and $J K \nsubseteq I P$ imply $J \subseteq P$ or $K \subseteq P$.
	 \end{enumerate}
	
\end{theorem}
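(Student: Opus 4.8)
The plan is to prove the chain of implications $(1)\Rightarrow(2)\Rightarrow(3)\Rightarrow(4)\Rightarrow(1)$, which delivers all four equivalences at once; here, for a hyperideal $Q$ of $R$ and $r\in R$, $(Q:r)$ denotes the residual hyperideal $\{x\in R:x\circ r\subseteq Q\}$, which contains $Q$ whenever $Q$ is a hyperideal, and $a\circ b\subseteq P-IP$ is read as ``$a\circ b\subseteq P$ and $a\circ b\nsubseteq IP$''. For $(1)\Rightarrow(2)$ fix $r\in R-P$: since $P\subseteq(P:r)$ and $IP\subseteq P$ gives $(IP:r)\subseteq(P:r)$, only the inclusion $(P:r)\subseteq P\cup(IP:r)$ needs an argument, and it follows from a dichotomy --- if $x\in(P:r)$ then $x\circ r\subseteq P$, and either $x\circ r\subseteq IP$, so $x\in(IP:r)$, or $x\circ r\nsubseteq IP$, so $x\circ r\subseteq P-IP$ and $I$-primeness together with $r\notin P$ forces $x\in P$. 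For $(2)\Rightarrow(3)$ I would invoke the elementary fact that a hyperideal equal to the set-theoretic union of two of its sub-hyperideals must coincide with one of them (pick $b$ in one of them but not the other and $c$ in the other but not the first, and examine the single element $b+c$); since the addition of a multiplicative hyperring is an ordinary group operation, the classical proof carries over verbatim, and applying it to $(P:r)=P\cup(IP:r)$ gives $(P:r)=P$ or $(P:r)=(IP:r)$.

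For $(3)\Rightarrow(4)$ assume $JK\subseteq P$, $JK\nsubseteq IP$ and $J\nsubseteq P$, and aim at $K\subseteq P$. Every $r\in J\setminus P$ satisfies $r\circ k\subseteq JK\subseteq P$ for all $k\in K$, so $K\subseteq(P:r)$; if $(P:r)=P$ for one such $r$ we are finished, so we may assume $(P:r)=(IP:r)$ for every $r\in J\setminus P$, that is, $r\circ k\subseteq IP$ for all $r\in J\setminus P$ and $k\in K$. Since $JK\nsubseteq IP$, choose $r_{0}\in J$ and $k_{0}\in K$ with $r_{0}\circ k_{0}\nsubseteq IP$; the previous line forces $r_{0}\in P$. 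Pick any $r_{1}\in J\setminus P$; then $r_{1}$ and $r_{0}+r_{1}$ both lie in $J\setminus P$, so $r_{1}\circ k_{0}\subseteq IP$ and $(r_{0}+r_{1})\circ k_{0}\subseteq IP$, and writing $r_{0}=(r_{0}+r_{1})+(-r_{1})$ and using the distributive inclusion, the identity $(-x)\circ y=-(x\circ y)$, and the closure of $IP$ under sums and negatives, we obtain $r_{0}\circ k_{0}\subseteq(r_{0}+r_{1})\circ k_{0}+\bigl(-(r_{1}\circ k_{0})\bigr)\subseteq IP$, contradicting the choice of $r_{0},k_{0}$. Hence $K\subseteq P$.

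For $(4)\Rightarrow(1)$, given $a\circ b\subseteq P-IP$, take $J=\langle a\rangle$ and $K=\langle b\rangle$; since $a\circ b\subseteq P$ and $P$ absorbs $R$, the hyperideal product satisfies $JK\subseteq P$, while $a\circ b\subseteq JK$ and $a\circ b\nsubseteq IP$ give $JK\nsubseteq IP$, so $(4)$ yields $J\subseteq P$ or $K\subseteq P$, i.e.\ $a\in P$ or $b\in P$; note also that $(3)\Rightarrow(1)$ comes for free by putting $r=b$ in $(3)$. The step I expect to be the genuine obstacle is $(3)\Rightarrow(4)$: in the classical ring case one compares single products with $IP$, but here each $r\circ k$ is a set, so one must track ``$\nsubseteq IP$'' with care and, above all, check that the subtraction $r_{0}=(r_{0}+r_{1})-r_{1}$ stays inside $IP$ after passing to hyper-sums --- precisely where the inclusion form of distributivity and the stability of $IP$ under $+$ and $-$ are indispensable. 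A convenient alternative route to $(4)$ is to split on whether $P$ is prime: if it is, $(4)$ is the usual prime property of hyperideals, and if not, Theorem 2.5 gives $P^{2}\subseteq IP$, which settles the case where both chosen representatives lie in $P$. The union-of-hyperideals lemma used in $(2)\Rightarrow(3)$ is the only other point needing an argument, and it is routine.
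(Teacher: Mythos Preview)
Your proof is correct and follows essentially the same cycle $(1)\Rightarrow(2)\Rightarrow(3)\Rightarrow(4)\Rightarrow(1)$ as the paper, with the same key devices: the dichotomy on $x\circ r\subseteq IP$ versus $x\circ r\nsubseteq IP$ for $(1)\Rightarrow(2)$, the ``union of two sub-hyperideals'' lemma for $(2)\Rightarrow(3)$, and the additive shift by an element of $J\setminus P$ for $(3)\Rightarrow(4)$. The only cosmetic difference is that in $(3)\Rightarrow(4)$ the paper argues the contrapositive (assuming $J\nsubseteq P$ and $K\nsubseteq P$ and showing $JK\subseteq IP$), whereas you assume $JK\nsubseteq IP$ and $J\nsubseteq P$ and derive a contradiction; the computation with $r_{0}=(r_{0}+r_{1})+(-r_{1})$ is identical in substance to the paper's $rt=(r+s)t-st$, and your explicit appeal to the inclusion form of distributivity and to $(-x)\circ y=-(x\circ y)$ is in fact more careful than the paper's one-line equality.
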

\begin{proof}
	 (1) $\Rightarrow$ (2) Suppose $r \in R-P$. Let $s \in(P: r)$, so $r s \subseteq  P$. If $r s \subseteq  P-I P$, then $s \in P$. If $r s \subseteq  I P$, then $s \in(I P: r)$. So $(P: r) \subseteq P \cup(I P: r)$. The other containment always holds.

	(2) $\Rightarrow$ (3) Note that if a hyperideal is a union of two hyperideals, then it is equal to one of them.

	$(3) \Rightarrow(4)$ Let $J$ and $K$ be two hyperideals of $R$ with $J K \subseteq P$. Assume that $J \nsubseteq P$ and $K \nsubseteq P$. We claim that $J K \subseteq I P$. Suppose $r \in J$. First, let $r \notin P$. Then $r K \subseteq P$ gives $K \subseteq(P: r)$. Now $K \nsubseteq P$, so $(P: r)=(I P: r)$. Thus $r K \subseteq I P$. Next, let $r \in J \cap P$. Choose $s \in J-P$. Then $r+s \in J-P$. By the first case $s K \subseteq I P$ and so $(r+s) K \subseteq I P .$ Pick $t \in K .$ Then $r t=(r+s) t-s t  \subseteq  I P $ and $r K \subseteq I P .$ Hence $J K \subseteq I P$.

	$(4) \Rightarrow(1)$ Let $r s \in P-I P .$ Then $(r)(s) \subseteq P$. But $(r)(s) \nsubseteq I P . $
So	$(r) \subseteq P$ or $(s) \subseteq P$ which means $r \in P$ or $s \in P .$
	\end{proof}

\begin{prop}
 Let $P$ be an $I$-prime hyperideal of a hyperring $R$ and  
   $J \subseteq P$ be a hyperideal of $R$. Then $P / J$ is $I-$prime hyperideal of $R / J$.
\end{prop}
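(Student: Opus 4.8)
The plan is to transport the $I$-primeness of $P$ along the natural surjection $\pi\colon R\to R/J$, $r\mapsto r+J$, which is a good epimorphism with kernel $J$. Writing $\overline{R}=R/J$, $\overline{P}=P/J$ and $\overline{I}=\pi(I)=(I+J)/J$, the first (routine) things I would check are that $\overline{P}$ is a proper hyperideal of $\overline{R}$ — this is exactly where $J\subseteq P\subsetneq R$ is used — and that $\overline{I}$ is a hyperideal of $\overline{R}$, proper under a suitable hypothesis such as $I+J\neq R$ (automatic when $J\subseteq I$, in which case $\overline{I}=I/J$); this is the sense in which ``$P/J$ is $I$-prime'' has to be read, namely with respect to the image $\overline{I}$.

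Next I would establish the one genuinely non-formal ingredient: $\overline{I}\,\overline{P}=\pi(IP)=(IP+J)/J$. Since $\pi$ is a good homomorphism, $\pi(i\circ p)=\pi(i)\circ\pi(p)$ for all $i\in I$, $p\in P$, so $\pi$ sends the generating set $\{\,i\circ p : i\in I,\ p\in P\,\}$ of $IP$ onto the generating set of $\overline{I}\,\overline{P}$; surjectivity of $\pi$ then promotes this to equality of the hyperideals they generate. This is the step I expect to be the main obstacle, since one must be careful that forming the product of hyperideals commutes with the quotient map, which genuinely uses that $\pi$ is a \emph{good} epimorphism and that $J$ is an honest hyperideal; after this point everything is coset bookkeeping.

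After that, the verification is short. Given $\overline{a},\overline{b}\in\overline{R}$ with $\overline{a}\,\overline{b}\subseteq\overline{P}-\overline{I}\,\overline{P}$, I would lift to $a,b\in R$ with $\pi(a)=\overline{a}$, $\pi(b)=\overline{b}$. From $\overline{a}\,\overline{b}=\pi(a\circ b)\subseteq\overline{P}$ and $J\subseteq P$ (a coset $x+J$ lies in $P/J$ only if $x\in P$) one gets $a\circ b\subseteq P$. If $a\circ b\subseteq IP$ held, then $\overline{a}\,\overline{b}=\pi(a\circ b)\subseteq\pi(IP)=\overline{I}\,\overline{P}$, contradicting $\overline{a}\,\overline{b}\nsubseteq\overline{I}\,\overline{P}$; hence $a\circ b\nsubseteq IP$. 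Thus $a\circ b\subseteq P-IP$, and since $P$ is $I$-prime we conclude $a\in P$ or $b\in P$, i.e.\ $\overline{a}\in\overline{P}$ or $\overline{b}\in\overline{P}$. Therefore $P/J$ is an $\overline{I}$-prime hyperideal of $R/J$.

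As a sanity check and an alternative route, I would note that the statement also drops out of the earlier Lemma characterising $I$-primeness of $P$ by weak primeness of $P/IP$ in $R/IP$, combined with the isomorphism $(R/J)/((IP+J)/J)\cong R/(IP+J)$ and the behaviour of weakly prime hyperideals under quotients; but the direct argument above is shorter and avoids invoking a separate quotient-stability result for weakly prime hyperideals.
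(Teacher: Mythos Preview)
Your argument is correct and follows the same route as the paper's own proof: identify $\overline{I}\,\overline{P}$ with $(IP+J)/J$, lift $\overline a,\overline b$ to $a,b\in R$, deduce $a\circ b\subseteq P-IP$ from $\overline a\,\overline b\subseteq P/J-(IP+J)/J$, and apply $I$-primeness of $P$. The paper simply asserts $I(P/J)=(IP+J)/J$ and the lifting step in one line, whereas you spell out why $\pi(IP)=\overline{I}\,\overline{P}$ and why $\overline{I}$ should be read as $(I+J)/J$; your extra care is warranted but does not constitute a different method.
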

\begin{proof}
	 Let $x, y \in R$ with $\bar{x} \circ \bar{y} \subseteq P / J-I(P / J)=P / J-(I P+J) / J$ where $\bar{x}$, $\bar{y}$ are the images of $x, y$ in $R / J$. Thus $x \circ y \subseteq P-I P$. So $x \in P$ or $y \in P$. Therefore $\bar{x} \in P / J$ or $\bar{y} \in P / J$. So $P / J$ is $I$-prime hyperideal.
	\end{proof}
Let $R_{1}$ and $R_{2}$ be two hyperrings. It is known that the prime hyperideals of $R_{1} \times R_{2}$ have the form $P \times R_{2}$ or $R_{1} \times Q$, where $P$ is a prime hyperideal of $R_{1}$ and $Q$ is a prime hyperideal of $R_{2}$. We next generalize this result to $I$-prime hyperideals.

\begin{theorem} \label{qqqqq}
Let $R_{i}$ be a hyperring and $I_{i}$ a hyperideal of $R_{i}$ for $i=1,2$. Let $I=I_{1} \times I_{2}$. Then the I-prime hyperideals of $R_{1} \times R_{2}$ have exactly one of the following three types:\\
(1) $P_{1} \times P_{2}$, where $P_{i}$ is a proper hyperideal of $R_{i}$ with $I_{i} P_{i}=P_{i}$.\\
(2) $P_{1} \times R_{2}$ where $P_{1}$ is an $I_{1}$-prime hyperideal of $R_{1}$ and $I_{2} R_{2}=R_{2}$.\\
(3) $R_{1} \times P_{2}$, where $P_{2}$ is an $I_{2}$-prime hyperideal of $R_{2}$ and $I_{1} R_{1}=R_{1}$.	
	\end{theorem}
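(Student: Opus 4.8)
The plan is to reduce the question to a coordinatewise one, verify that each of the three listed forms yields an $I$-prime hyperideal, and then show that an arbitrary $I$-prime hyperideal has one of them by splitting into cases according to which factor is all of the corresponding ring.

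First I would record the two structural facts that drive the reduction. Every hyperideal $Q$ of $R_1\times R_2$ equals $\pi_1(Q)\times\pi_2(Q)$: the inclusion ``$\subseteq$'' is clear, and if $p_i\in\pi_i(Q)$ then, using the absorbing zero and $p_i\in 1\circ p_i$, one gets $(p_1,0)\in(1,0)\circ Q\subseteq Q$ and $(0,p_2)\in(0,1)\circ Q\subseteq Q$, whence $(p_1,p_2)=(p_1,0)+(0,p_2)\in Q$. Writing $P=P_1\times P_2$ this way, the analogous bookkeeping --- split $(x,y)=(x,0)+(0,y)$ and use $(i,0),(0,j)\in I_1\times I_2$ --- gives $IP=(I_1\times I_2)(P_1\times P_2)=I_1P_1\times I_2P_2$, so
\[ P-IP=(P_1\times P_2)\setminus(I_1P_1\times I_2P_2). \]
I would also use throughout that a hyperideal $P_i$ is proper iff $1\notin P_i$ (else $x\in 1\circ x\subseteq P_i$ for all $x$) and that a prime hyperideal is $I_i$-prime. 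The three forms are pairwise disjoint since $P_1,P_2$ are recovered from $P$ and the forms differ in whether $P_1=R_1$ and whether $P_2=R_2$; the remaining possibility is excluded by properness of $P$, so ``exactly one'' is automatic once every $I$-prime hyperideal is shown to have one of the forms.

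For sufficiency: in form (1), $IP=I_1P_1\times I_2P_2=P$, so $P-IP=\emptyset$ and $P$ is $I$-prime vacuously. In form (2), $I_2R_2=R_2$ gives $IP=I_1P_1\times R_2$ and hence $P-IP=(P_1-I_1P_1)\times R_2$; if $a\circ b\subseteq P-IP$ then, comparing first coordinates, $a_1\circ b_1\subseteq P_1-I_1P_1$, so $a_1\in P_1$ or $b_1\in P_1$, i.e. $a\in P$ or $b\in P$ --- form (3) is symmetric. For necessity, let $P=P_1\times P_2$ be $I$-prime. If both factors are proper, I claim $I_1P_1=P_1$ and $I_2P_2=P_2$ (form (1)): picking, if possible, $p\in P_1\setminus I_1P_1$ and $q\in R_2\setminus P_2$, the elements $a=(p,q)$, $b=(1,0)$ give $a\circ b=(p\circ1)\times\{0\}\subseteq P$ with $a\circ b\nsubseteq IP$ (as $p\in p\circ1$, $p\notin I_1P_1$) while $a,b\notin P$, a contradiction; symmetrically for $I_2P_2$. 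If $P_2=R_2$ (so $P_1$ is proper), I must show $P_1$ is $I_1$-prime and $I_2R_2=R_2$ (form (2); the case $P_1=R_1$ is symmetric and gives (3)): applying the $I$-prime hypothesis to $a=(a_1,1)$, $b=(b_1,1)$ with $a_1\circ b_1\subseteq P_1-I_1P_1$ shows $P_1$ is $I_1$-prime, and if $I_2R_2\neq R_2$ then, choosing $t\in R_2\setminus I_2R_2$ and (when $P_1$ is not prime) $c,d\in R_1\setminus P_1$ with $c\circ d\subseteq P_1$, the pair $a=(c,t)$, $b=(d,1)$ again violates $I$-primeness.

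I expect the main obstacle to be twofold. The ``routine'' reduction $Q=\pi_1(Q)\times\pi_2(Q)$ and $IP=I_1P_1\times I_2P_2$ must be verified with the hyperoperation in hand, so it genuinely leans on the absorbing zero and the identity. The real subtlety is forcing $I_2R_2=R_2$ in form (2): the witness above only works when $P_1$ is not prime, and if $P_1$ is prime then $P_1\times R_2$ is already a prime hyperideal --- hence $I$-prime for \emph{every} $I$ with no constraint on $I_2$ --- so the statement is best read with this degenerate case folded into (2) (equivalently, $I_2R_2=R_2$ is needed precisely to pin down the $I$-prime hyperideals of that shape that are not prime). Granting this reading, assembling the three necessity cases with the sufficiency check completes the proof.
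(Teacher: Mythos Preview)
Your approach mirrors the paper's: verify that each of the three forms is $I$-prime by computing $P-IP$ coordinatewise, then for necessity write $P=P_1\times P_2$ and exhibit specific products to force the structure. The paper's witnesses differ slightly from yours---it uses $(a,0)(b,0)$ to show $P_1$ is $I_1$-prime, and $(x,1)(1,y)$ with $x\in P_1-I_1P_1$, $y\in P_2$ to force $P_2=R_2$ or $P_1=R_1$ when $P\neq IP$---but the logic is the same. Your explicit verification that every hyperideal of the product splits as $\pi_1(Q)\times\pi_2(Q)$ and that $IP=I_1P_1\times I_2P_2$ is a welcome addition; the paper uses these facts silently.

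You have correctly spotted a genuine lacuna. The paper's necessity argument stops at ``Assume that $P_2=R_2$. Then $P_1\times R_2$ is $I$-prime, where $P_1$ is $I_1$-prime'' and never derives $I_2R_2=R_2$. As you observe, this condition cannot be extracted when $P_1$ happens to be prime, since then $P_1\times R_2$ is already a prime hyperideal and hence $I$-prime for every $I$, placing no constraint on $I_2$. So the phrase ``exactly one of the following three types'', read literally with the side conditions $I_iR_i=R_i$, is too strong; one must either fold the prime case into (2)/(3) without that clause, or read (2)/(3) as classifying only the non-prime $I$-prime hyperideals of that shape. Your analysis of this point is sharper than the paper's.
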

\begin{proof}
	We first prove that a hyperideal of $R_{1} \times R_{2}$ having one of these three types is $I$-prime hyperideal. The first type is clear since $P_{1} \times P_{2}-I\left(P_{1} \times P_{2}\right)=$ $P_{1} \times P_{2}-\left(I_{1} P_{1} \times I_{2} P_{2}\right)=\phi$. Suppose that $P_{1}$ is $I_{1}$-prime hyperideal and $I_{2} R_{2}=R_{2}$. Let $(a, b)(x, y) \subseteq P_{1} \times R_{2}-\left(I_{1} P_{1} \times I_{2} R_{2}\right)=P_{1} \times R_{2}-\left(I_{1} P_{1} \times R_{2}\right)=$ $\left(P_{1}-I_{1} P_{1}\right) \times R_{2}$. Then $a x \subseteq P_{1}-I_{1} P_{1}$ implies that $a \in P_{1}$ or $x \in P_{1}$, so $(a, b) \in P_{1} \times R_{2}$ or $(x, y) \in P_{1} \times R_{2}$. Hence $P_{1} \times R_{2}$ is $I$-prime hyperideal. Similarly we can prove the last case.
	Next, let $P_{1} \times P_{2}$ be $I$-prime and $a b  \subseteq P_{1}-I_{1} P_{1}$. Then $(a, 0)(b, 0)=$ $(a b, 0) \in P_{1} \times P_{2}-I\left(P_{1} \times P_{2}\right)$, so $(a, 0) \in P_{1} \times P_{2}$ or $(b, 0) \in P_{1} \times P_{2}$, that is, $a \in P_{1}$ or $b \in P_{1}$. Hence $P_{1}$ is $I_{1}$-prime. Likewise, $P_{2}$ is $I_{2}$-prime.

	Assume that $P_{1} \times P_{2} \neq I_{1} P_{1} \times I_{2} P_{2}$, say $P_{1} \neq I_{1} P_{1}$. Let $x \in P_{1}-I_{1} P_{1}$ and $y \in P_{2}$. Then $(x, 1)(1, y)=(x, y) \in P_{1} \times P_{2}$. So $(x, 1) \in P_{1} \times P_{2}$ or $(1, y) \in P_{1} \times P_{2}$. Thus $P_{2}=R_{2}$ or $P_{1}=R_{1}$. Assume that $P_{2}=R_{2}$. Then $P_{1} \times R_{2}$ is $I$-prime, where $P_{1}$ is $I_{1}$-prime.
	
	\end{proof}

\section{$n-$absorbing $I-$prime hyperideals}
We start this section by the definition of $n-$absorbing $I-$prime hyperideals. 

\begin{definition}
	 A proper hyperideal $P$ of a hyperring $R$ is a $2-$absorbing $I-$prime hyperideal if for $x_1,x_2,x_3 \in R$ such that $x_1 x_2 x_3 \subseteq P-IP$, then $x_1 x_2 \subseteq P$ or $x_1 x_3 \subseteq P$  or $x_2 x_3 \subseteq P$. A proper hyperideal $P$ of $R$ is an $n-$absorbing $I-$prime hyperideal if for $x_1, \cdots,x_{n+1} \in R$ such that $x_1 \cdots x_{n+1} \subseteq P-IP$, then $x_1 \cdots x_{i-1} x_{i+1} \cdots  x_{n+1} \subseteq P$ for some $i \in \{1,2, \cdots ,n+1\}$.
\end{definition}
It is clear that the class of $n-$absorbing $I-$prime hyperideals contains properly the class of $n-$absorbing hyperideals. As we can see this in the following example.
\begin{exmp}
 Let $K$ be a hyperfield and $R=K[x_1,\cdots,x_{n+2}]$ be a polynomial multiplicative hyperring. Consider the hyperideals $P=<x_1 \cdots x_{n+1} , x_1^2 \cdots x_n , x_1^2 x_{n+2}>$ and $I=<x_1 \cdots x_n>$. So $P-IP=<x_1 \cdots x_{n+1}, x_1^2 \cdots x_n ,  x_1^2 x_{n+2} >$ $-$ $< x_1 \cdots x_{n+1},  x_1^2 \cdots x_n ,  x_1^2 \cdots x_n x_{n+2}>$. Hence $P$ is an $n-$absorbing $I-$prime hyperideal but not $n-$absorbing hyperideal.

\end{exmp}
\begin{lemma}
	Let $P$ be an $I-$prime hyperideal of $R$ and $K$ be a subset of $R$. For any $a \in R, a  K \subseteq P,\,aK\nsubseteq IP$ and $a \notin P$ implies that $K \subseteq P$. (or $a K \subseteq P$ and $K \nsubseteq P$ imply that $a \in P$ ).
\end{lemma}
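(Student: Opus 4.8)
The plan is to derive this from the colon-hyperideal characterization of $I$-prime hyperideals in Theorem~\ref{qqq}. Fix $a \in R$ with $aK \subseteq P$, $aK \nsubseteq IP$ and $a \notin P$. The first step is purely a translation: since $aK = \bigcup_{k \in K} a\circ k$, the containment $aK \subseteq P$ says exactly that every $k \in K$ lies in the colon hyperideal $(P:a) = \{x \in R : a\circ x \subseteq P\}$, i.e. $K \subseteq (P:a)$; likewise $aK \subseteq IP$ would be equivalent to $K \subseteq (IP:a)$, so the hypothesis $aK \nsubseteq IP$ is equivalent to $K \nsubseteq (IP:a)$.

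The second step invokes Theorem~\ref{qqq}: because $a \in R - P$ and $P$ is $I$-prime, item~(3) gives $(P:a) = P$ or $(P:a) = (IP:a)$. If $(P:a) = P$, then $K \subseteq (P:a) = P$ and we are done. If instead $(P:a) = (IP:a)$, then from $K \subseteq (P:a)$ we get $K \subseteq (IP:a)$, i.e. $aK \subseteq IP$, contradicting the hypothesis $aK \nsubseteq IP$. Hence only the first alternative can hold and $K \subseteq P$. For the parenthetical version, suppose $aK \subseteq P$, $aK \nsubseteq IP$ and $K \nsubseteq P$; were $a \notin P$, the statement just proved would force $K \subseteq P$, a contradiction, so $a \in P$.

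I expect the only delicate point is the bookkeeping in the first step — keeping straight that, for a hyperproduct, $aK \subseteq P$ means the whole set $aK$ sits inside $P$, and that $aK \nsubseteq IP$ means some element of some $a\circ k$ escapes $IP$, which is exactly $K \nsubseteq (IP:a)$. If one prefers to stay entirely within the language of hyperideals rather than colon hyperideals, an alternative is to set $J = (a)$ and $K' = (K)$, check $JK' \subseteq P$ (since $P$ absorbs products and every $a\circ k \subseteq P$) and $JK' \nsubseteq IP$ (since $aK \subseteq JK'$), and then apply the equivalence $(1)\Leftrightarrow(4)$ of Theorem~\ref{qqq}: as $a \notin P$ forces $J \nsubseteq P$, this yields $K' \subseteq P$, hence $K \subseteq P$.
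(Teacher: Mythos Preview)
Your argument is correct, but it proceeds along a different line from the paper's own proof. The paper works directly from the definition: it writes $aK=\bigcup_{k\in K} a\circ k$, notes each $a\circ k\subseteq P$, asserts $a\circ k\nsubseteq IP$, and then applies the $I$-prime condition elementwise (with $a\notin P$) to conclude $k\in P$ for every $k\in K$. You instead route the argument through the colon-hyperideal characterization of Theorem~\ref{qqq}, reformulating the hypotheses as $K\subseteq(P:a)$ and $K\nsubseteq(IP:a)$ and then invoking the dichotomy $(P:a)=P$ or $(P:a)=(IP:a)$.

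What each approach buys: the paper's argument is self-contained and does not appeal to any prior result, but the step ``$a\circ k\nsubseteq IP$ for all $k\in K$'' does not actually follow from $aK\nsubseteq IP$ (which only guarantees this for \emph{some} $k$), so the elementwise application of $I$-primeness requires more care than the paper indicates. Your approach sidesteps this entirely: once the hypotheses are packaged as $K\subseteq(P:a)$ and $K\nsubseteq(IP:a)$, the dichotomy from Theorem~\ref{qqq} immediately rules out $(P:a)=(IP:a)$ and forces $K\subseteq P$. Your alternative via part~(4) of Theorem~\ref{qqq} with $J=(a)$ and $K'=(K)$ is also fine, though one should note that $JK'\subseteq P$ uses that $P$ is a hyperideal (so it absorbs the ambient $R$-multiples arising in $(a)(K)$), which you implicitly rely on.
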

\begin{proof}
	Let $a K \subseteq P$ and $a \notin P$ for any $a \in R$. Then we have $a K=\cup a k_{i} \subseteq P$ for all $k_{i} \in K$. Hence $a k_{i} \subseteq P$ and $ak_i\nsubseteq IP$ for all $k_{i} \in K$. Since $P$ is an $I-$prime hyperideal and $a \notin P$, $k_{i} \in P,\,\, \forall k_{i} \in K$. Thus $K \subseteq P$.
\end{proof}
\begin{lemma}
	Let $P$ be an $I-$prime hyperideal of $R$ and $A, B$ be subsets of $R$. If $A B \subseteq P$ and $AB\nsubseteq IP$, then $A \subseteq P$ or $B \subseteq P$.
\end{lemma}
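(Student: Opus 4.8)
The plan is to reduce the statement for arbitrary subsets to the already-established statement for hyperideals. Let $J = \langle A \rangle$ and $K = \langle B \rangle$ be the hyperideals of $R$ generated by $A$ and $B$. The key point is to check that $JK \subseteq P$ while $JK \nsubseteq IP$; once this is done, Theorem \ref{qqq} (the equivalence of assertions (1) and (4)) immediately gives $J \subseteq P$ or $K \subseteq P$, and hence $A \subseteq J \subseteq P$ or $B \subseteq K \subseteq P$, which is what we want.

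For the inclusion $JK \subseteq P$, I would unwind the description of a generic element of a product of two hyperideals in a multiplicative hyperring: it is a finite sum of elements of sets $j \circ k$ with $j \in J$, $k \in K$, where in turn $j$ lies in a finite sum of sets of the form $r \circ a$ ($r \in R$, $a \in A$) together with integer multiples of elements of $A$, and likewise for $k$. Expanding $j \circ k$ by distributivity of $\circ$ over $+$, each resulting summand is contained in a set built from a factor $a \circ b$ with $a \in A$, $b \in B$ multiplied by elements of $R$ and by an integer; since $a \circ b \subseteq AB \subseteq P$ and $P$ is absorbing, this summand lies in $P$, and since $P$ is closed under addition the whole sum lies in $P$. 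For $JK \nsubseteq IP$, note that $A \subseteq J$ and $B \subseteq K$ force $AB \subseteq JK$, and $AB \nsubseteq IP$ by hypothesis.

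The main obstacle is essentially bookkeeping: writing out precisely what the elements of $\langle A\rangle$ and of the hyperideal product $JK$ look like, and verifying that the repeated use of distributivity together with the absorbing property of $P$ really keeps everything inside $P$; the set-valued nature of $\circ$ makes this a little delicate, but it is routine. An alternative, more self-contained route avoids generated ideals: assuming $B \nsubseteq P$ and picking $a_0 \in A \setminus P$, the preceding lemma (in contrapositive form) yields $a_0 B \subseteq IP$; then for an arbitrary $a \in A$ and $b \in B$ one handles the case $a \notin P$ directly by that lemma, and the case $a \in P$ by observing that $a + a_0 \notin P$, applying the lemma to $a+a_0$ and to $-a_0$, and recovering $a \circ b \subseteq (a+a_0)\circ b + (-a_0)\circ b \subseteq IP$ from distributivity and the fact that $IP$ is closed under addition. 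This shows $AB \subseteq IP$, contradicting the hypothesis, so $A \subseteq P$ or $B \subseteq P$.
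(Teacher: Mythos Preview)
Your first approach, reducing to the generated hyperideals $J=\langle A\rangle$ and $K=\langle B\rangle$ and invoking the equivalence (1)$\Leftrightarrow$(4) of Theorem~\ref{qqq}, is correct and genuinely different from the paper's argument. The paper proceeds by a short direct contradiction: assuming $A\nsubseteq P$ and $B\nsubseteq P$, it chooses $x\in A\setminus P$ and $y\in B\setminus P$, observes $x\circ y\subseteq AB\subseteq P$, asserts $x\circ y\nsubseteq IP$, and then applies the definition of $I$-prime to force $x\in P$ or $y\in P$. Your route trades this two-line computation for the hyperideal-product bookkeeping you describe, but it gains access to the stronger hyperideal-level characterization of $I$-primeness already established, and it is more careful about the $IP$ hypothesis: the paper's claim that the \emph{particular} product $x\circ y$ lies outside $IP$ does not obviously follow from $AB\nsubseteq IP$ alone. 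Your alternative argument (shifting by $a_0$, using the preceding lemma in contrapositive form, and recovering $a\circ b\subseteq IP$ via distributivity) is closer in spirit to the paper's direct attack but addresses precisely this subtlety.
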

\begin{proof}
	Assume that $A B \subseteq P, AB\nsubseteq IP$ and $   A \nsubseteq P$, $B \nsubseteq P$. Since $A B=\bigcup a_{i} b_{i} \subseteq P$, $a_{i} b_{i} \subseteq P$, for $a_{i} \in A, b_{i} \in B$. And as $A \nsubseteq P$ and $B \nsubseteq P$, we have $x \notin P$ and $y \notin P$ for some $x \in A, y \in B$. Then $x y \subseteq A B \subseteq P$ and $xy \nsubseteq IP$. From being   $P$ an $I-$prime hyperideal, we have $x \in P$ or $y \in P$  which is a contradiction. Thus $A \subseteq P$ or $B \subseteq P$.	
\end{proof}
\vspace{0.5cm}
Every $I-$prime hyperideal is a $2-$absorbing $I-$prime hyperideal. Since for  $(ab)c \subseteq P-IP$, we have $a b\subseteq P$ or $b c \subseteq P$. If $a b \nsubseteq P$ then by $I-$prime hyperideal of $P$, we have $c \in P$ and so $a c \in P$ or $b c \in P$. Hence $P$ is a $2-$absorbing $I-$prime hyperideal of $R$.

\begin{lemma}
	Let $P$ be a hyperideal of $R$ and $P_{1}, P_{2}, \ldots, P_{n}$ be $2-$absorbing primary hyperideals of $R$ such that $\sqrt{P_{i}}=P$ for all $\mathrm{i}=1, \ldots, \mathrm{n}$. Then $\bigcap_{i=1}^{n} P_{i}$ is a 2-absorbing $I-$prime hyperideal and $\bigcap_{i=1}^{n} P_{i}=P$.
\end{lemma}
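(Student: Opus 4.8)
Write $Q:=\bigcap_{i=1}^{n}P_{i}$. The plan is to establish the radical identity first and then read the absorption property off from it, treating the three index patterns symmetrically. For $\sqrt{Q}=P$: since $Q\subseteq P_{j}$, every prime hyperideal containing $Q$ is among those containing $P_{j}$, so $\sqrt{Q}\subseteq\sqrt{P_{j}}=P$; conversely, from $P_{1}P_{2}\cdots P_{n}\subseteq\bigcap_{i=1}^{n}P_{i}=Q$, every prime hyperideal $\mathfrak{q}$ with $Q\subseteq\mathfrak{q}$ contains the product $P_{1}\cdots P_{n}$, and iterating the ideal form of primeness for ordinary prime hyperideals (the same bookkeeping as in Theorem \ref{qqq}(4)) gives $P_{j}\subseteq\mathfrak{q}$ for some $j$, whence $P=\sqrt{P_{j}}\subseteq\sqrt{\mathfrak{q}}=\mathfrak{q}$; intersecting over all such $\mathfrak{q}$ yields $P\subseteq\sqrt{Q}$. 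In particular $Q\subseteq P$ holds unconditionally.

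For the absorption property, $Q$ is proper because $Q\subseteq P=\sqrt{P_{1}}\neq R$. I would take $x_{1},x_{2},x_{3}\in R$ with $x_{1}x_{2}x_{3}\subseteq Q-IQ$ and assume $x_{1}x_{2}\nsubseteq Q$ (the patterns for $x_{1}x_{3}$ and $x_{2}x_{3}$ being identical by symmetry). Since $x_{1}x_{2}\nsubseteq\bigcap_{i=1}^{n}P_{i}$, fix an index $j$ with $x_{1}x_{2}\nsubseteq P_{j}$. As $x_{1}x_{2}x_{3}\subseteq Q\subseteq P_{j}$ and $P_{j}$ is $2-$absorbing primary, one of $x_{1}x_{3}\subseteq\sqrt{P_{j}}=P$ or $x_{2}x_{3}\subseteq\sqrt{P_{j}}=P$ must hold, i.e., by the previous step, the relevant pair lies in $\sqrt{Q}$. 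Once $Q=\sqrt{Q}$ is available (equivalently the identity $\bigcap_{i=1}^{n}P_{i}=P$), that pair lies in $Q$, so every triple contained in $Q-IQ$ produces a pair-product in $Q$, and $Q$ is $2-$absorbing, hence a fortiori $2-$absorbing $I-$prime for the given $I$.

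What this argument robustly delivers is $\sqrt{\bigcap_{i=1}^{n}P_{i}}=P$ together with the statement that $\bigcap_{i=1}^{n}P_{i}$ is $2-$absorbing primary, and the main obstacle is the gap between that and the literal conclusion: both the equality $\bigcap_{i=1}^{n}P_{i}=P$ and the $2-$absorbing $I-$prime property rest on $\bigcap_{i=1}^{n}P_{i}$ coinciding with its own radical, whereas a $2-$absorbing primary hyperideal need not be radical in general — so this is the step I expect to have to pin down most carefully, either by using the hyperring-specific hypotheses to force it here or by recording the conclusion in the refined form. Once $Q=\sqrt{Q}$ is in hand the remaining parts are immediate, since any triple contained in $Q-IQ$ is in particular contained in $Q$; and the only routine side issue — the degenerate configuration $x_{1}x_{2}x_{3}\subseteq IQ$ — is already excluded by writing the hypothesis as $x_{1}x_{2}x_{3}\subseteq Q-IQ$, together with a quick dismissal of the trivial subcase in which $x_{1}x_{2}$ already lies in every $P_{i}$.
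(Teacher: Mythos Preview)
Your approach coincides with the paper's: compute $\sqrt{\bigcap_{i}P_{i}}=\bigcap_{i}\sqrt{P_{i}}=P$, then for a triple $x_{1}x_{2}x_{3}\subseteq Q-IQ$ with $x_{1}x_{2}\nsubseteq Q$ pick $j$ with $x_{1}x_{2}\nsubseteq P_{j}$ and use the $2$-absorbing primary property of $P_{j}$ to force one of the remaining pair-products into $\sqrt{P_{j}}=P$.

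The gap you flag is real and is not actually closed in the paper either. The paper opens its proof with ``Assume $P=\bigcap_{i=1}^{n}P_{i}$'', thereby overloading the symbol $P$ so that the hypothesis $\sqrt{P_{i}}=P$ silently becomes $\sqrt{P_{i}}=\bigcap_{j}P_{j}$; under that reading the last step $x_{k}x_{3}\subseteq\sqrt{P_{j}}=P=Q$ goes through tautologically. Read literally, the lemma's conclusion $\bigcap_{i}P_{i}=P$ is neither proved nor true in general (a $2$-absorbing primary hyperideal need not equal its radical), and the argument delivers exactly what you say it does: $\sqrt{Q}=P$ and $Q$ is $2$-absorbing primary, hence $2$-absorbing $I$-prime only once $Q=\sqrt{Q}$ is granted. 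So your diagnosis is accurate; the paper resolves it by notational fiat rather than by an additional argument, and you should feel free to record the conclusion in the refined form you suggest.
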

\begin{proof}
	Assume $P=\bigcap_{i=1}^{n} P_{i}$ and so $\sqrt{P}=\sqrt{\cap_{i=1}^n P_i}=\cap_{i=1}^n \sqrt{P_i}=P$. Let  $x y z \subseteq P-IP$ with $x y \nsubseteq P$, for $x, y, z \in R$. Thus $x y \nsubseteq P_{i}$ for some $i=1,2,\cdots, n$. From being $P_{i}$ a 2-absorbing primary hyperideal and $x y z \subseteq P-IP \subseteq P_{i}$, hence $x z \subseteq \sqrt{P_{i}}=P$ or $y z \subseteq \sqrt{P_{i}}=P$ which means that $P$ is a 2-absorbing $I-$prime hyperideal of $R$. 
\end{proof}
\begin{theorem}
		Let $h: R \rightarrow L$ be a bijective good homomorphism of hyperrings and $P$ be a 2-absorbing $I-$prime hyperideal of $L$. Then $h^{-1}(P)$ is a 2-absorbing $h^{-1}(I)-$prime hyperideal of $R$.
\end{theorem}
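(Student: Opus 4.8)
The plan is to transfer the absorbing condition back and forth along $h$, using that a bijective good homomorphism carries hyperoperations, products of hyperideals, and preimages in a controlled way. First I would record the elementary facts: $h^{-1}(P)$ is a proper hyperideal of $R$ (proper because $h$ is onto and $P \neq L$, and a hyperideal because the preimage of a hyperideal under a good homomorphism is a hyperideal, as already used for $f^{-1}(Q)$ above), $h^{-1}(I)$ is a hyperideal of $R$, and since $h$ is bijective we have $h(h^{-1}(P)) = P$, $h(h^{-1}(I)) = I$, and, crucially, $h\bigl(h^{-1}(I)\,h^{-1}(P)\bigr) = IP$, equivalently $h^{-1}(IP) = h^{-1}(I)\,h^{-1}(P)$. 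The last identity is where a little care is needed: an element of $IP$ is a finite sum of elements lying in hyperproducts $a \circ b$ with $a \in I$, $b \in P$; writing $a = h(a')$, $b = h(b')$ with $a' \in h^{-1}(I)$, $b' \in h^{-1}(P)$ and using $h(a' \circ b') = h(a') \circ h(b') = a \circ b$ together with injectivity of $h$, one gets $h^{-1}(IP) \subseteq h^{-1}(I)\,h^{-1}(P)$, while the reverse inclusion is immediate.

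Next I would take $x_1, x_2, x_3 \in R$ with $x_1 x_2 x_3 \subseteq h^{-1}(P) - h^{-1}(I)\,h^{-1}(P)$, that is, $x_1 \circ x_2 \circ x_3 \subseteq h^{-1}(P)$ but $x_1 \circ x_2 \circ x_3 \nsubseteq h^{-1}(I)\,h^{-1}(P)$. Applying $h$ and using that it is a good homomorphism gives $h(x_1) \circ h(x_2) \circ h(x_3) = h(x_1 \circ x_2 \circ x_3) \subseteq h(h^{-1}(P)) = P$. For the non-containment, if one had $h(x_1) \circ h(x_2) \circ h(x_3) \subseteq IP$, then $x_1 \circ x_2 \circ x_3 \subseteq h^{-1}(IP) = h^{-1}(I)\,h^{-1}(P)$, a contradiction; hence $h(x_1) h(x_2) h(x_3) \subseteq P - IP$.

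Now I would invoke that $P$ is a $2$-absorbing $I$-prime hyperideal of $L$, so one of $h(x_1) h(x_2) \subseteq P$, $h(x_1) h(x_3) \subseteq P$, $h(x_2) h(x_3) \subseteq P$ holds. Say $h(x_1) \circ h(x_2) = h(x_1 \circ x_2) \subseteq P$; then $x_1 \circ x_2 \subseteq h^{-1}(P)$, i.e.\ $x_1 x_2 \subseteq h^{-1}(P)$, and the other two cases are symmetric. This shows $h^{-1}(P)$ is a $2$-absorbing $h^{-1}(I)$-prime hyperideal of $R$, and the identical argument with $n+1$ factors handles the $n$-absorbing version. The only genuine obstacle is the bookkeeping identity $h^{-1}(IP) = h^{-1}(I)\,h^{-1}(P)$, which is precisely where bijectivity of $h$ (rather than mere surjectivity of a good epimorphism) enters; everything else is a routine push/pull along $h$.
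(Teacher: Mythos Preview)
Your proof is correct and follows essentially the same approach as the paper: push $x_1x_2x_3$ forward along $h$, apply the $2$-absorbing $I$-prime condition in $L$, and pull the resulting two-factor inclusion back to $R$. In fact your treatment is more thorough than the paper's, since you explicitly justify the identity $h^{-1}(IP)=h^{-1}(I)\,h^{-1}(P)$ (and explain why bijectivity is needed for it), whereas the paper simply asserts this equality in passing.
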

\begin{proof}
	Suppose that $a b c \subseteq h^{-1}(P), h^{-1}(I)h^{-1}(P)=h^{-1}(P)- h^{-1}(IP)$, for $a, b, c \in R$. So $h(a b c)=h(a) h(b) h(a) \subseteq P$ and $h(a b c) \nsubseteq IP$. From being $P$  a 2-absorbing $I-$prime hyperideal, we have $h(a) h(b) \subseteq P$ or $h(a) h(c) \subseteq P$ or $h(b) h(c) \subseteq P$, that is $h(a b) \subseteq P$ or $h(a c) \subseteq P$ or $h(b c) \subseteq P$ which implies $a b \subseteq$ $h^{-1}(P)$ or $a c \subseteq h^{-1}(P)$ or $b c \subseteq h^{-1}(P)$. So $h^{-1}(P)$ is a 2-absorbing $h^{-1}(I)-$prime hyperideal of $R$.	
\end{proof}
\begin{theorem}
	Suppose that $P$ is an $n-$absorbing $I-$prime hyperideal of $R$. Then $\sqrt{P}$ is an $n-$absorbing $\sqrt{I}-$prime hyperideal of $R$ and $a^{n} \subseteq P$ for all $a \in \sqrt{P}$.	
\end{theorem}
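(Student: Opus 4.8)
The plan is to establish the two assertions in the order ``$a^{n}\subseteq P$ for every $a\in\sqrt{P}$'' first, and ``$\sqrt{P}$ is $n$-absorbing $\sqrt{I}$-prime'' second, since the second is most naturally deduced from the first.

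For the first assertion, fix $a\in\sqrt{P}$ and let $m$ be the least positive integer with $a^{m}\subseteq P$ (such $m$ exists because membership in $\sqrt{P}$ is witnessed by a power lying in $P$). It suffices to show $m\le n$, for then $a^{n}\subseteq P$ since $a^{m}\subseteq P$ and $P$ is a hyperideal. Assume $m\ge n+1$ and pick $\beta\in a^{\,m-n}$. Apply the hypothesis to the $n+1$ elements consisting of $n$ copies of $a$ together with $\beta$, whose product $a^{n}\circ\beta$ satisfies $a^{n}\circ\beta\subseteq a^{n}\circ a^{\,m-n}=a^{m}\subseteq P$. Provided this product is not contained in $IP$, $n$-absorbing $I$-primeness forces either $a^{n}\subseteq P$ --- impossible, as $n<m$ contradicts minimality of $m$ --- or $a^{\,n-1}\circ\beta\subseteq P$; if the latter holds for \emph{every} $\beta\in a^{\,m-n}$, taking the union over $\beta$ yields $a^{\,m-1}=a^{\,n-1}\circ a^{\,m-n}\subseteq P$, again contradicting minimality. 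Hence $m\le n$.

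The case left uncovered is that $a^{n}\circ\beta\subseteq IP$ for \emph{all} $\beta$, i.e. $a^{m}\subseteq IP$; this is the degeneracy treated in the proof of Theorem~\ref{qqqq}(2), and I would handle it by perturbing one of the factors $a$ by a suitable $p\in P$ so that the perturbed product remains inside $P$ but escapes $IP$, then finishing as above. I expect this $IP$-degeneracy to be the main obstacle of the whole theorem; everything else is bookkeeping.

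For the second assertion, take $x_{1},\dots,x_{n+1}\in R$ with $S:=x_{1}\circ\cdots\circ x_{n+1}\subseteq\sqrt{P}$ and $S\nsubseteq\sqrt{I}\,\sqrt{P}=\sqrt{IP}$ (the last identity being the one already used in this section). Since $S$ is a finite subset of $\sqrt{P}$, the first assertion gives $t^{n}\subseteq P$ for each $t\in S$, so any product of $m\ge\lvert S\rvert(n-1)+1$ elements of $S$ repeats some $t$ at least $n$ times and hence contains $t^{n}$ as a subproduct; thus $S^{m}\subseteq P$ for such $m$. By commutativity of $\circ$, $S^{m}=x_{1}^{m}\circ\cdots\circ x_{n+1}^{m}$, and $S^{m}\nsubseteq IP$ because $S$ has an element $t\notin\sqrt{IP}$, whence $t^{m}\in S^{m}\setminus IP$. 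Passing to the principal hyperideals $(x_{j}^{m})$, whose product is the hyperideal generated by $S^{m}$, we get $(x_{1}^{m})\cdots(x_{n+1}^{m})\subseteq P$ and $\nsubseteq IP$, and the hyperideal form of the $n$-absorbing $I$-prime property --- the $n$-ary analogue of Theorem~\ref{qqq}(4), proved in the same way --- gives $(x_{1}^{m})\cdots\widehat{(x_{i}^{m})}\cdots(x_{n+1}^{m})\subseteq P$ for some $i$. In particular $\bigl(x_{1}\circ\cdots\widehat{x_{i}}\cdots\circ x_{n+1}\bigr)^{m}\subseteq P$, so $x_{1}\circ\cdots\widehat{x_{i}}\cdots\circ x_{n+1}\subseteq\sqrt{P}$, which is exactly what $n$-absorbing $\sqrt{I}$-primeness of $\sqrt{P}$ requires. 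The only technical care needed here is the finiteness of the sets returned by $\circ$ (for the pigeonhole step) and the fact that raising to the $m$-th power keeps the product outside $IP$ --- which is why we single out an element of $S$ outside $\sqrt{IP}$ rather than merely outside $IP$.
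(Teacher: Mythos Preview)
Your first assertion follows the paper's line exactly: reduce the exponent $m$ toward $n$ by applying $n$-absorbing $I$-primeness to $a^m$. You are right to flag the $IP$-degeneracy (the paper simply writes ``by using the $n$-absorbing $I$-prime property on products $a^m$'' and ignores it). But your handling is incomplete: your dichotomy covers ``$a^n\circ\beta\nsubseteq IP$ for every $\beta\in a^{m-n}$'' and ``$a^n\circ\beta\subseteq IP$ for every $\beta$'', yet not the mixed case where the containment in $IP$ holds for some $\beta$ and fails for others. For those bad $\beta$ you cannot conclude $a^{n-1}\circ\beta\subseteq P$, so the union giving $a^{m-1}\subseteq P$ does not go through. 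The perturbation idea borrowed from Theorem~\ref{qqqq}(2) is only gestured at, and it is not clear how to run it here with a single repeated factor $a$.

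For the second assertion your route is far heavier than the paper's and leans on hypotheses you do not have. The paper argues in one stroke: from $a_1\cdots a_{n+1}\subseteq\sqrt P\setminus\sqrt{IP}$ pass to $(a_1\cdots a_{n+1})^n=a_1^{\,n}\cdots a_{n+1}^{\,n}\subseteq P$, observe this cannot sit inside $IP$ (else $a_1\cdots a_{n+1}\subseteq\sqrt{IP}$), and invoke $n$-absorbing $I$-primeness. By contrast you (i) assume the hyperproduct $S=x_1\circ\cdots\circ x_{n+1}$ is \emph{finite}, which is nowhere guaranteed in a general multiplicative hyperring; (ii) need a pigeonhole exponent $m\ge|S|(n-1)+1$ in place of the paper's fixed $n$; and (iii) invoke an ``$n$-ary analogue of Theorem~\ref{qqq}(4)'' for hyperideals that is neither stated nor proved in the paper. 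Each of (i) and (iii) is a genuine gap. Your pigeonhole step is, to be fair, an honest attempt to justify the containment $S^m\subseteq P$ from ``$t^n\subseteq P$ for each $t\in S$''---a passage the paper takes for granted with exponent $n$---but it only works under the extra finiteness assumption, so the simplification the paper makes is exactly what you should aim for.
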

\begin{proof}
	Let $a \in \sqrt{P}$. Then $a^{m} \subseteq P$ for some $m \in \mathbb{N}$. If $m \leq n$, we are done. If $m>n$, by using the $n$-absorbing $I-$prime property on products $ a^{m}$, we conclude that $a^{n} \subseteq P$. Now, consider $a_{1}  \cdots  a_{n+1} \subseteq \sqrt{P} - \sqrt{I}\sqrt{P}=\sqrt{P}-\sqrt{IP}$ for $a_{1}, \cdots, a_{n+1} \in R$. Thus $\left(a_{1}  \ldots  a_{n+1}\right)^{n}=a_{1}^{n}  \cdots  a_{n+1}^{n} \subseteq P$. If $a_{1}^{n}  \cdots  a_{n+1}^{n} \subseteq IP$, then $a_1 \cdots  a_{n+1} \subseteq  \sqrt{IP}$ which is a contradiction. Hence $a_{1}^{n}  \cdots  a_{n+1}^{n} \subseteq P - IP$ and $P$  $n$-absorbing $I-$prime hyperideal gives us the desired.
\end{proof}
\label{We use this corollary afetr theorm 2.2.}

\begin{lemma}
	
Let $P_i$ be an $n_i-$absorbing $I-$prime hyperideal of a hyperring  $R$ for $i=1,2, \cdots ,m$ and $IP_i=IP_j$, for $ i\neq j$, Then $\cap_{i=1}^m P_i$ is an $n-$absorbing $I-$prime hyperideal where $n=\sum_{i=1}^{m}n_i$.
\end{lemma}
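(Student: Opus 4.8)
The plan is to write $P=\bigcap_{i=1}^{m}P_{i}$ and $n=\sum_{i=1}^{m}n_{i}$, take $x_{1},\dots ,x_{n+1}\in R$ with $x_{1}\cdots x_{n+1}\subseteq P-IP$, and aim to produce an index $j$ with $x_{1}\cdots x_{j-1}x_{j+1}\cdots x_{n+1}\subseteq P$. The first step is to pass the hypothesis down to each $P_{i}$: since $P\subseteq P_{i}$ we have $x_{1}\cdots x_{n+1}\subseteq P_{i}$ for every $i$, and since all the hyperideals $IP_{i}$ coincide (with common value $J$, say) and $IP\subseteq J$, the assumption $x_{1}\cdots x_{n+1}\nsubseteq IP$, combined with $IP_{i}=IP_{j}$, forces $x_{1}\cdots x_{n+1}\nsubseteq IP_{i}$ for each $i$. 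Hence $x_{1}\cdots x_{n+1}\subseteq P_{i}-IP_{i}$, which is exactly the condition needed to invoke ``$P_{i}$ is $n_{i}$-absorbing $I$-prime'' against this product.

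I would then organize the argument by induction on $m$, so that the essential case is $m=2$: if $P_{1}$ is $n_{1}$-absorbing $I$-prime, $P_{2}$ is $n_{2}$-absorbing $I$-prime, and $IP_{1}=IP_{2}$, then $P_{1}\cap P_{2}$ is $(n_{1}+n_{2})$-absorbing $I$-prime (one checks that the standing hypothesis is inherited by the partial intersections along the way). For $m=2$ the proof runs parallel to the classical one for $n$-absorbing ideals in \cite{anderson}: first, using the $I$-prime analogue of the fact that an $n$-absorbing ideal absorbs from any product of at least $n$ elements, one extracts from $x_{1}\cdots x_{n_{1}+n_{2}+1}$ an $n_{1}$-element sub-product $u=x_{A_{1}}\subseteq P_{1}$; then one feeds $u$ together with the $n_{2}+1$ complementary factors into the $n_{2}$-absorbing $I$-prime property of $P_{2}$ (the total product being unchanged, hence still outside $IP_{2}$, after this regrouping), and uses repeatedly that multiplying a subset already contained in a hyperideal by anything keeps it inside; the outcome is an $(n_{1}+n_{2})$-element sub-product lying in both $P_{1}$ and $P_{2}$, i.e. in $P_{1}\cap P_{2}$. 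Alternatively one can treat general $m$ directly, processing $i=1,\dots ,m$ greedily to extract pairwise disjoint index sets $A_{i}$ with $|A_{i}|\le n_{i}$ and $x_{A_{i}}\subseteq P_{i}$ (at stage $i$ there are still at least $n_{i}+1$ uncommitted indices to group with), then taking $A=\bigcup_{i}A_{i}$ with $|A|\le n$, so that $x_{A}\subseteq P_{i}$ for every $i$, i.e. $x_{A}\subseteq P$, and finally padding $A$ to an $n$-element set whose complement is the desired $\{j\}$.

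The step I expect to be the main obstacle is keeping the ``$-IP_{i}$'' condition alive throughout. When the $n_{i}$-absorbing $I$-prime property is applied to a grouped product, the $n_{i}$-element sub-product it returns need not consist of the singleton factors one hoped for, and any sub-product that is a \emph{proper} sub-collection of $x_{1},\dots ,x_{n+1}$ might lie inside $IP_{i}$, which would block the next application; arranging the groupings (and the induction) so that every product actually handed to some $P_{i}$ genuinely lies in $P_{i}-IP_{i}$ is the delicate point, and it is precisely here that the equality $IP_{i}=IP_{j}$ is used. Granting that, the remaining manipulations — pigeonhole on the factors, and ``a subset already in a hyperideal stays there after multiplication'' — are routine and follow the template of \cite{anderson}.
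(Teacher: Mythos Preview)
Your ``direct'' alternative---extract for each $i$ an index set $A_i$ of size at most $n_i$ with $\prod_{j\in A_i}x_j\subseteq P_i$, set $A=\bigcup_i A_i$ so that $|A|\le\sum_i n_i=n$, and observe that the product over $A$ lies in every $P_i$ (hence in $P$) because it contains each $A_i$-subproduct as a factor---is exactly the paper's argument, almost verbatim. The paper does not make the $A_i$ disjoint (only the union bound $|A|\le n$ is needed) and does not use induction on $m$, so your inductive route via $m=2$ is a legitimate but unnecessary detour.

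One genuine slip: in your first paragraph you claim that $IP\subseteq J$ (where $J=IP_i$) together with $x_1\cdots x_{n+1}\nsubseteq IP$ ``forces'' $x_1\cdots x_{n+1}\nsubseteq IP_i$. The implication points the wrong way: from $IP\subseteq J$ and $x\nsubseteq IP$ one cannot infer $x\nsubseteq J$; you would need the \emph{reverse} inclusion $IP_i\subseteq I\bigl(\bigcap_j P_j\bigr)$, which the hypothesis $IP_i=IP_j$ does not by itself supply. The paper simply asserts ``by hypothesis'' at this point and never justifies the passage to $P_i-IP_i$, nor does it address the iterated version of the same issue that you (rightly) flag as the main obstacle. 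So your write-up is at least as complete as the paper's---but you should not present that first reduction as settled when the containment you invoke runs in the unhelpful direction.
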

\begin{proof}
		Let $k>n$ and $x_1 \cdots x_k\subseteq \cap_{i=1}^m P_i-I\cap_{i=1}^m P_i$. Then by hypothesis for each $ i=1 \cdots m$, there exists a product of $n_i$ of these $k-$elements in $P_i$. Let $A_i$ be the collection of these elements and let $A=\cup_{i=1}^k A_i$. Thus $A$ has at most $n-$elements. Now, as $P_i$ is an $n-$absorbing $I-$prime hyperideal, the product of all elements of $A$ must be in each $P_i$ so $\cap P_i$ contains a product of at most $n-$elements and therefore it is an $n-$absorbing $I-$prime hyperideal of $R$.
\end{proof}

\begin{theorem} \label{3.7}
		Let $R=\prod_{i=1}^{n+1}R_i$ and $P$ be a proper non-zero hyperideal of $R$. If P is an $(n+1)-$absorbing $I-$prime hyperideal of $R$, then $P=P_1\times P_2\times \cdots \times P_{n+1}$ for some proper $n-$absobing $I_i-$prime hyperideals $P_1,\cdots ,P_{n+1}$ of $R_1,\cdots,R_{n+1}$ respectively, where  $I=\prod_{i=1}^{n+1}I_i$ and $I_i = R_i$, $ \forall i=1,2,\cdots,n+1.$	
	\end{theorem}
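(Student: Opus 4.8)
The plan is to extract the product decomposition of $P$ from the direct-product structure of $R$, and then to observe that the prescribed choice $I_i=R_i$ makes the $n$-absorbing $I_i$-prime requirement on each factor automatic; the only substantive point will be that every factor is a \emph{proper} hyperideal.

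\textbf{Splitting $P$.} Let $e_i\in R$ denote the element having the identity of $R_i$ in its $i$-th coordinate and $0$ elsewhere; the $e_i$ are central, mutually ``orthogonal'', and $e_1+\cdots+e_{n+1}=1$. Set $P_i=\{a\in R_i:\ (0,\dots,a,\dots,0)\in P\}$ (the $i$-th slot being $a$), equivalently the $R_i$-component of $e_i\circ P$. Just as in the proof of Theorem \ref{qqqqq}, where the two-factor decomposition is obtained via products such as $(a,1)(1,b)=(a,b)$, one checks that each $P_i$ is a hyperideal of $R_i$ and that $P=P_1\times P_2\times\cdots\times P_{n+1}$; since $P\neq\{0\}$, at least one $P_i$ is non-zero, and since $P\neq R$, at least one $P_i$ is proper.

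\textbf{The $I_i$-prime condition is free.} Because $I=\prod_{i=1}^{n+1}I_i$ with every $I_i=R_i$, we have $I=R$ and hence $I_iP_i=R_iP_i=P_i$ (as $R_i$ has an identity), so $P_i-I_iP_i=\emptyset$. Thus the implication defining an $n$-absorbing $I_i$-prime hyperideal has an antecedent, $x_1\cdots x_{n+1}\subseteq P_i-I_iP_i$, that can never be met, and therefore every proper hyperideal of $R_i$ — in particular each proper $P_i$ — is an $n$-absorbing $I_i$-prime hyperideal. So the theorem is proved as soon as each $P_i$ is shown to be proper.

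\textbf{Properness of the factors — the crux.} This is the step I expect to be the real obstacle. The naive move is to invoke that $P$ is $(n+1)$-absorbing $I$-prime, but for $I=R$ one again has $IP=P$ and $P-IP=\emptyset$, so this hypothesis gives no direct leverage; properness of the $P_i$ must therefore be coaxed out of the product structure together with $P$ being proper and non-zero. The route I would try is to suppose, after reindexing, that $P_{n+1}=R_{n+1}$ — whence $e_{n+1}\in P$ — choose $k\le n$ with $P_k\subsetneq R_k$ and $c\in R_k\setminus P_k$, and then feed the $(n+1)$-absorbing machinery a list of $n+1$ elements assembled from $e_1,\dots,e_n$ and a lift of $c$, engineered so that their total product lies in $P$ (using $e_{n+1}\in P$ to absorb the last coordinate) while no product of $n$ of them does. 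The delicate points are (i) forcing the relevant product into $P-IP$ rather than merely into $P$, which is exactly where the emptiness of $P-IP$ for $I=R$ bites and may require carrying the argument with a genuinely proper $I$ whose coordinates are only afterwards set equal to $R_i$, and (ii) controlling the set-valued behaviour of the hyperoperation when multiplying the idempotent-supported elements. Once a contradiction is secured at this point, the splitting of Step 1 and the vacuity observation of Step 2 finish the proof.
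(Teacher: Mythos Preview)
Your route is genuinely different from the paper's, and your diagnosis is sharper than the paper's own argument on one point while missing the move the paper actually makes on another.

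\textbf{What the paper does.} The paper never appeals to the vacuity you exploit. It argues directly: assuming $P_1$ fails to be $n$-absorbing $I_1$-prime, it picks witnesses $x_1,\dots,x_{n+1}\in R_1$ with $x_1\cdots x_{n+1}\subseteq P_1- I_1P_1$ and no $n$-fold subproduct in $P_1$, lifts them to $a_i=(x_i,1,\dots,1)$, adjoins $a_{n+2}=(1,0,\dots,0)$, and observes that $a_1\cdots a_{n+2}=(x_1\cdots x_{n+1},0,\dots,0)\subseteq P-IP$ while, for $i\le n+1$, the product $a_1\cdots\widehat{a_i}\cdots a_{n+2}$ has first coordinate $x_1\cdots\widehat{x_i}\cdots x_{n+1}\not\subseteq P_1$, contradicting the $(n+1)$-absorbing $I$-prime hypothesis on $P$. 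The extra ``collapsing'' factor $a_{n+2}$ is precisely the device you were groping for in your third step. Note that the paper does \emph{not} separately prove that each $P_i$ is proper; it only argues the $n$-absorbing $I_i$-prime clause.

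\textbf{Where your observation bites.} You are right that, read literally with $I_i=R_i$ (hence $I=R$), both $P-IP$ and every $P_i-I_iP_i$ are empty, so the paper's hypothesis and the paper's contradiction step are simultaneously vacuous, and the ``proper'' part of the conclusion is unprovable (indeed false: take $P=R_1\times 0\times\cdots\times 0$). So your inability to push through properness is not a failure of technique; it reflects a defect in the stated hypotheses. The paper's lifting argument is sound and is the intended mechanism when the $I_i$ are genuine proper hyperideals (the coordinate components of a proper $I$); the clause ``$I_i=R_i$'' in the statement should be read as an oversight rather than as something to lean on. If you rewrite your proof using the paper's $a_{n+2}$ trick and drop the vacuity shortcut, you recover exactly the paper's argument.
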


\begin{proof}
	Let $x_1,\cdots,x_{n+1} \in R$ with $x_1 \cdots x_{n+1} \subseteq P_1-I_1P_1$ and suppose by contrary that $P_1$ is not an $n-$absorbing $I_1-$prime hyperideal of $R_1$. Set $a_i=(x_i,1,1,\cdots,1)$ for $i=1,2,\cdots,n+1$ and $a_{n+2}=(1,0,\cdots,0)$. Then we have $a_1a_2 \cdots a_{n+2}=(x_1x_2 \cdots x_{n+1},0,0,\cdots ,0) \subseteq P-IP$
and $a_1 \cdots a_{i-1}a_{i+1} \cdots  a_{n+2}=(x_1x_2 \cdots x_{i-1}x_{i+1} \cdots x_{n+1},0, \cdots,0) \nsubseteq P$ for $i=1,\cdots ,n+1$, which contradicts with being $P$ an $(n+1)-$absorbing $I-$prime hyperideal. Hence $P_1$ must be an $n-$absorbing $I_1-$prime hyperideal of $R_1$. By similar arguments, we can show that $P_i$ is an $n-$absorbing $I_i-$prime hyperideal of  $R_i$ for $i=1,\cdots ,n+1$.
	\end{proof}
\begin{theorem}\label{qq}
	Let $R=\prod_{i=1}^{n+1} R_i$, where $R_i$	is a hyperring for $i \in \{1,\cdots,n+1\}$. If $P$ is an $n-$absorbing $I-$prime hyperideal of $R$, then either $P=IP$ or $P=P_1\times P_2 \times \cdots \times  P_{i-1} \times R_i \times P_{i+1} \cdots \times  P_{n+1}$ for some $i \in \{1,\cdots,n+1\}$ and if $P_j \ne R_i$ for $j \ne i$, then $P_j$ is an $n-$absorbing hyperideal in $R_i$.
\end{theorem}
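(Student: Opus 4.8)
The plan is to reduce everything to the factors $R_i$. As in the proof of Theorem~\ref{qqqqq}, every hyperideal of the finite product $R=\prod_{i=1}^{n+1}R_i$ is a product of hyperideals of the factors, so write $P=P_1\times\cdots\times P_{n+1}$ and $I=I_1\times\cdots\times I_{n+1}$, whence $IP=I_1P_1\times\cdots\times I_{n+1}P_{n+1}$. If $P=IP$ we are in the first alternative and are done, so assume $P\neq IP$; then $I_kP_k\subsetneq P_k$ for some index $k$, and fix $t\in P_k\setminus I_kP_k$. The first step is to show $P_i=R_i$ for at least one $i$. Suppose not, so every $P_m$ is proper, hence $1\notin P_m$ for all $m$. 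Build $n+1$ test elements of $R$: let $a_1$ have $t$ in coordinate $k$ and $1$ in every other coordinate, and let $a_2,\dots,a_{n+1}$ be the $n$ tuples gotten from $(1,1,\dots,1)$ by changing, in each, the entry in one of the $n$ coordinates $\neq k$ to $0$ (a distinct coordinate for each). Then $a_1\cdots a_{n+1}$ is the tuple with $t$ in coordinate $k$ and $0$ elsewhere, which lies in $P$ but not in $IP$ since $t\in P_k\setminus I_kP_k$. Yet deleting $a_1$ yields a product with $k$-th coordinate $1\notin P_k$, and deleting any $a_m$ with $m\ge2$ yields a product equal to $1$ in the slot that $a_m$ had zeroed, hence again outside that proper factor of $P$; so no product of $n$ of the $a_m$ lies in $P$, contradicting that $P$ is $n$-absorbing $I$-prime. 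Therefore $P_i=R_i$ for some $i$, that is $P=P_1\times\cdots\times R_i\times\cdots\times P_{n+1}$.

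For the remaining claim, fix $j\neq i$ with $P_j\neq R_j$ and take $x_1,\dots,x_{n+1}\in R_j$ with $x_1\cdots x_{n+1}\subseteq P_j$. I would lift these to elements $b_1,\dots,b_{n+1}$ of $R$ carrying $x_1,\dots,x_{n+1}$ in coordinate $j$ and, in the other coordinates, a suitable pattern of $0$'s and $1$'s together with the element $t$ placed in some coordinate $c\neq j$ with $I_cP_c\subsetneq P_c$, chosen so that $b_1\cdots b_{n+1}\subseteq P\setminus IP$ while, for each $m$, the $j$-th coordinate of $b_1\cdots b_{m-1}b_{m+1}\cdots b_{n+1}$ equals $x_1\cdots x_{m-1}x_{m+1}\cdots x_{n+1}$ and every other coordinate of it stays in the corresponding factor of $P$. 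Then the $n$-absorbing $I$-prime property applied to $b_1,\dots,b_{n+1}$, read off in coordinate $j$, gives $x_1\cdots x_{m-1}x_{m+1}\cdots x_{n+1}\subseteq P_j$ for some $m$, so $P_j$ is $n$-absorbing.

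The product decomposition and the first step are routine once the idempotent-type test elements are written down. I expect the real obstacle to be the lifting in the second step: the auxiliary coordinates must be chosen so that $b_1\cdots b_{n+1}$ genuinely misses $IP$ (not merely lands in $P$) while every $n$-fold subproduct still records an $n$-fold subproduct of the $x_m$ in coordinate $j$. This is immediate whenever some coordinate $c\neq j$ satisfies $I_cP_c\subsetneq P_c$. The delicate case is when $j$ is the \emph{only} coordinate for which $I_mP_m\subsetneq P_m$, i.e.\ $I_mP_m=P_m$ for every $m\neq j$, so one cannot escape $IP$ through any coordinate but $j$; there the honest conclusion is that $P_j$ is an $n$-absorbing $I_j$-prime hyperideal of $R_j$, which one can hope to upgrade to a genuine $n$-absorbing hyperideal under a mild extra hypothesis such as $P_j^{2}\nsubseteq I_jP_j$, in the spirit of Theorem~\ref{qqqq}. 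Pinning down this boundary case is the crux.
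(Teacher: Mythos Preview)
Your first step is correct. The paper's argument there is slicker: it picks any $(x_1,\dots,x_{n+1})\in P\setminus IP$, factors it as $\prod_{m=1}^{n+1}e_m$ where $e_m$ has $x_m$ in slot $m$ and $1$ elsewhere, and applies the $n$-absorbing $I$-prime property directly; the surviving $n$-fold subproduct has a $1$ in some slot $i$, forcing $P_i=R_i$. No auxiliary element $t$ or zero-pattern is needed.

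For the second step you missed the paper's key manoeuvre. Rather than hunting for a coordinate $c\neq j$ with $I_cP_c\subsetneq P_c$ and inserting a witness $t$ there, the paper uses the coordinate $i$ already produced by the first step, where $P_i=R_i$, and simply places $1$ there. Concretely, set $b_m=(0,\dots,0,1,0,\dots,0,x_m,0,\dots,0)$ with $1$ in slot $i$ and $x_m$ in slot $j$; then $b_1\cdots b_{n+1}$ has $1$ in slot $i$, the product $x_1\cdots x_{n+1}$ in slot $j$, and $0$ elsewhere, and every $n$-fold subproduct has exactly the same shape with one $x_m$ omitted in slot $j$. The point is that $1\in P_i=R_i$ keeps the product inside $P$, while $1\notin I_iP_i=I_iR_i=I_i$ keeps it outside $IP$. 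With this choice your ``delicate case'' disappears: the escape coordinate is always $i$ and the escape element is always $1$, independent of what happens at $j$, so one really obtains that $P_j$ is $n$-absorbing (not merely $n$-absorbing $I_j$-prime).

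That said, your instinct that something is being swept under the rug is sound: the paper's assertion that $b_1\cdots b_{n+1}\subseteq P\setminus IP$ tacitly uses $I_i\neq R_i$, and this is never justified. If $I_i=R_i$ the written argument does not go through. So the boundary issue you flag is real, but it is located at the component $I_i$ rather than at the interplay between $I_jP_j$ and $P_j$; the paper's construction collapses your difficulty into a single unstated hypothesis on the $i$-th factor of $I$.
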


\begin{proof}
	Let $P=\prod_{i=1}^{n+1} P_i$ be an $n-$absorbing $I-$prime hyperideal of $R$. Then there exists $(x_1,\cdots,x_{n+1}) \subseteq P-IP$, and so $(x_1,1,\cdots,1) (1,x_2,1$
	 \\$\cdots,1) \cdots (1,1,\cdots,1,x_{n+1})= (x_1,x_2,\cdots,x_{n+1}) \subseteq P-IP$. As $P$ is an $n-$absorbing $I-$prime hyperideal, we have $(x_1,x_2,\cdots ,x_{i-1},1,x_{i+1},\cdots $\\ $,x_{n+1}) \subseteq  P$ for some $i \in \{1,2,\cdots ,n+1\}$. Thus $(0,0,\cdots,0,1,0,\cdots,0)$ 
	\\$\in P$ and hence $P=P_1\times P_2 \times \cdots \times P_{i-1} \times R_i \times P_{i+1}  \cdots  \times P_{n+1}$. If $P_j \ne R_i$ for $j \ne i$, then we have to prove $P_j$ is an $n-$absorbing hyperideal of $R_i$. Let $i < j$ and take $x_1x_2\cdots x_{n+1} \subseteq P_j$. Then $(0,0,\cdots,0,1,0,\cdots,0,x_{1}x_{2}\cdots x_{n+1},0\cdots,0)$ $= (0,0,\cdots,1,0,\cdots,0,x_{1}\\
	,0\cdots,0) (0,0,\cdots,1,0,\cdots,0,x_{2},0\cdots,0) \cdots$ $(0,0,\cdots,1,0,\cdots,0,x_{n+1}\\
	,0\cdots,0) \subseteq  P-IP$. Since $P$ is an $n-$absorbing $I-$prime hyperideal, $(0,0,\cdots,0,1,0,\cdots,0,x_1x_2 \cdots x_{k-1} x_{k+1}\cdots x_{n+1}, 0,\cdots ,0) \in P$ for some $k \in \{1,2,\cdots,n+1\}$. Thus $x_1x_2 \cdots x_{k-1} x_{k+1}\cdots x_{n+1} \in P_j$ and hence $P_j$ is an $n-$absorbing hyperideal of $R_i$. We can do similar arguments for the case $i>j$.
\end{proof}

In the following result, we characterize hyperrings in which every proper hyperideal of $R$ is an $n-$absorbing $I-$prime hyperideal.	
\begin{theorem} \label{3.9}
	
	Let $\mid Max(R)\mid \geq n+1 \geq 2.$ Then each proper hyperideal of $R$ is an $n-$absorbing $I-$prime hyperideal if and only if each quotient of $R$ is a product of $(n+1)-$hyperfields.

\end{theorem}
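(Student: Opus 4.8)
\emph{Plan.} The statement is a biconditional, so I would prove the two implications separately, in each case reducing to the hyperideal structure of a finite product of hyperfields $F_1\times\cdots\times F_m$: there every hyperideal is a product $J_1\times\cdots\times J_m$ with $J_j\in\{0,F_j\}$ (a hyperfield has no proper non-zero hyperideal), the zero is absorbing, and there are no zero-divisors. I would also use that the property ``every proper hyperideal is an $n$-absorbing $I$-prime hyperideal'' passes to every quotient $R/J$ (the $n$-absorbing $I$-prime property descends to quotients exactly as in the $I$-prime case treated in Section~2), together with the fact that $Max(R/J)$ consists of the maximal hyperideals of $R$ that contain $J$.

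\emph{The ``if'' direction.} Assume each quotient of $R$ is a product of at most $n+1$ hyperfields. Let $P$ be a proper hyperideal and take $x_1,\dots,x_{n+1}\in R$ with $x_1\cdots x_{n+1}\subseteq P-IP$. I would pass to $\bar R=R/IP=F_1\times\cdots\times F_m$ with $m\le n+1$; then $\bar P=P/IP=J_1\times\cdots\times J_m$ with $J_j\in\{0,F_j\}$, and $\bar P\neq 0$ because $x_1\cdots x_{n+1}\nsubseteq IP$, so $K:=\{\,j:J_j=0\,\}$ satisfies $|K|\le m-1\le n$. For each $j\in K$ the $j$th component of the hyperproduct $x_1\cdots x_{n+1}$ is $\{0\}$ in $F_j$, so, since $F_j$ has no zero-divisors and absorbing zero, some factor $x_{g(j)}$ has $j$th component $0$. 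As $|K|<n+1$, choose $i_0\notin g(K)$; then $x_{g(j)}$ still occurs in $x_1\cdots x_{i_0-1}x_{i_0+1}\cdots x_{n+1}$ for every $j\in K$, so the image of this hyperproduct lies in $J_1\times\cdots\times J_m=\bar P$, i.e.\ $x_1\cdots x_{i_0-1}x_{i_0+1}\cdots x_{n+1}\subseteq P$. Hence $P$ is an $n$-absorbing $I$-prime hyperideal.

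\emph{The ``only if'' direction.} Assume every proper hyperideal of $R$ is $n$-absorbing $I$-prime. By the quotient remark it suffices to show, for $R$ itself, that (i) every hyperideal of $R$ is idempotent (equivalently radical), so $R$ is von Neumann regular, every prime hyperideal is maximal, and every hyperideal is a finite intersection of maximal hyperideals; and (ii) $|Max(R)|$ is finite and, combined with the standing hypothesis $|Max(R)|\ge n+1$, equal to $n+1$. Granting (i)--(ii), distinct maximal hyperideals are comaximal and, for any hyperideal $J$, $J=\bigcap_{\mathfrak m\supseteq J}\mathfrak m$ over the finitely many maximal hyperideals above it, so the Chinese Remainder Theorem gives $R/J\cong\prod_{\mathfrak m\supseteq J}R/\mathfrak m$, a product of at most $n+1$ hyperfields (exactly $n+1$ for $J=\{0\}$). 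For (i) I would apply the theorem asserting $a^n\subseteq P$ for all $a\in\sqrt P$ when $P$ is $n$-absorbing $I$-prime, together with the adapted form of Theorem~\ref{qqqq}(2) (a non-prime $I$-prime hyperideal $P$ has $P^2\subseteq IP$), to principal hyperideals, ruling out a hyperideal properly containing a power of itself and hence forcing idempotency. For (ii) I would argue by contradiction: from $n+2$ distinct maximal hyperideals $\mathfrak m_1,\dots,\mathfrak m_{n+2}$, comaximality and (i) produce idempotents $e_1,\dots,e_{n+1}$ ($e_i$ a unit modulo $\mathfrak m_j$ for $j\neq i$ and $0$ modulo $\mathfrak m_i$) plus one further element whose hyperproduct lies in a suitable proper hyperideal $P$ while no sub-hyperproduct of $n$ of them does; this is the transport along $R\to R/(\mathfrak m_1\cdots\mathfrak m_{n+2})$ of the elementary fact that the zero hyperideal of a product of $n+2$ hyperfields is not $n$-absorbing $I$-prime, contradicting the hypothesis.

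\emph{Main obstacle.} The real work is the ``only if'' direction, and within it steps (i)--(ii): extracting von Neumann regularity and the sharp bound $|Max(R)|\le n+1$ from the $n$-absorbing $I$-prime hypothesis. Unlike the classical ring case one must keep track of hyperproducts of \emph{sets} rather than of single elements, and the relaxation ``$-IP$'' has to be fed in carefully so that the obstructing configuration built in (ii) genuinely sits in $P-IP$ and not merely in $P$.
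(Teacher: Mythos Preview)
Your ``if'' direction is the paper's: both pass to $R/IP$, identify it with a product $F_1\times\cdots\times F_{n+1}$ of hyperfields, note that $P/IP$ is proper (so at least one coordinate is the full $F_j$, hence at most $n$ coordinates are zero), and use zero-divisor-freeness in each factor to find an index that can be dropped. Your write-up supplies the combinatorics the paper leaves implicit, but the mechanism is identical.

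For the ``only if'' direction the paper takes a different and more direct route. It does \emph{not} attempt to establish von Neumann regularity or to bound $|Max(R)|$. Instead it fixes $n+1$ distinct maximal hyperideals $m_1,\dots,m_{n+1}$, sets $m=\prod_i m_i$, and uses Prime Avoidance to pick $x_i\in m_i\setminus\bigcup_{j\neq i}m_j$ witnessing that $m$ is \emph{not} $n$-absorbing. Since $m$ is $n$-absorbing $I$-prime by hypothesis, the $n$-absorbing analogue of Theorem~\ref{qqqq}(2) gives $m^{n+1}=Im$, and CRT yields $R/Im\cong\prod_i R/m_i^{n+1}$. Finally, if some $R/m_i^{n+1}$ had a nonzero proper hyperideal $H$, then $0\times\cdots\times H\times\cdots\times 0$ would be an $n$-absorbing $0$-prime hyperideal of the product, and Theorem~\ref{qq} forces one coordinate to be the whole factor, contradicting the choice of $H$. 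Thus each $R/m_i^{n+1}$ is a hyperfield. The paper's argument therefore pivots on Theorem~\ref{qq} rather than on any global structural classification of $R$.

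Your plan has a real gap at step~(i). You claim that the hypothesis forces every hyperideal to be idempotent, appealing to the bound $a^n\subseteq P$ for $a\in\sqrt P$ and an adapted Theorem~\ref{qqqq}(2). Neither yields idempotence: the first controls the nilpotence index of elements of $\sqrt P$ relative to $P$, not $P$ versus $P^2$; the second only gives $P^{n+1}\subseteq IP$ when $P$ happens not to be $n$-absorbing, which again says nothing about $P=P^2$. Without idempotence you cannot conclude that every hyperideal is radical, that primes are maximal, or that an arbitrary $J$ is a finite intersection of maximals, so your CRT step for general quotients $R/J$ does not start. Your step~(ii) is closer in spirit to what the paper actually does (and is essentially the content of Theorem~\ref{qq}), but it cannot by itself replace the missing step~(i).
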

\begin{proof}
	$(\Rightarrow)$ 
	Let $P$ be a proper hyperideal of $R$. Then $\frac{R}{IP}\cong F_1\times\cdots\times F_{n+1}$ and $\frac{P}{IP}\cong P_1\times\cdots\times P_{n+1}$, where $P_i$ is a hyperideal of $F_i$, $i=1,\cdots ,{n+1}$. If $P=IP$ then there is nothing to prove, otherwise we have $P_j=0,$ for at least one $j\in \{1,\cdots , {n+1}\}$ since $\frac{P}{IP}$ is proper. So, $\frac{P}{IP}$ is an $n-$absorbing $0-$prime hyperideal of $\frac{R}{IR}$ which means $P$ is an $n-$absorbing $I-$prime hyperideal of $R$.\\ 
	$(\Leftarrow)$ Let $m_1,\cdots , m_{n+1}$ be distinct maximal hyperideals of $R$. Then $m=\prod_{i=1}^{n+1}m_i$ is an $n-$absorbing $I-$hyperideal of $R$. we claim that $m$ is not an $n-$absorbing hyperideal. First, if $m_i\subseteq\cup_{i\neq j} m_j$, then there exist $m_j$ with $m_i\subseteq m_j$ by Prime Avoidance Lemma 
	  and this contradicts the maximality of $m_i$. Hence $m_i\nsubseteq\cup_{i\neq j}m_j$ and so, there exists $x_i\in m_i-\cup_{i\neq j}^{n+1}m_j$ so that $x_1 \cdots x_{n+1}\subseteq m$. If there exists $j\in \{1,\cdots ,n+1\}$ with $a=x_1x_2 \cdots x_{j-1}x_{j+1} \cdots x_{n+1}	\subseteq  m\subseteq m_j$, then $x_i\in m_j$ for some  $i\neq j$ which is a contradiction. Hence $m$ is not an $n-$absorbing hyperideal and so $m^{n+1}=Im$. Then by Chinese Remainder Theorem
	 we have $\frac{R}{Im}\simeq\frac{R}{m_1^{n+1}}\times\frac{R}{m_2^{n+1}}\times...\times\frac{R}{m_{n+1}^{n+1}}$. Put $F_i= \frac{R}{m_1^{n+1}}$. If $F_i$ is not a hyperfield, then it has a nonzero proper hyperideal $H$ and so $0\times0\times \cdots \times0\times H\times0\times \cdots \times0$ is an $n-$absorbing $0-$hyperideal of $\frac{R}{Im}$. Thus, by Lemma \ref{qq} we have $H=F_i$ or $H=0$ which is impossible. Hence $F_i$ is a hyperfield.
	
\end{proof}

\begin{corollary}
	
	Suppose $\mid Max(R)\mid\geq n+1\geq2$.
	Then each proper hyperideal of $R$ is
	an $n-$absorbing  $0-$hyperideal if and only if $R\cong F_1\times \cdots \times F_{n+1}$, where $F_1, \cdots ,F_{n+1}$ are hyperfields.
	
\end{corollary}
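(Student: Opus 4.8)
The plan is to obtain this corollary as the special case $I=\{0\}$ of Theorem \ref{3.9}, after a small amount of bookkeeping about direct-product decompositions. Note first that an $n$-absorbing $0$-hyperideal is exactly an $n$-absorbing $\{0\}$-prime hyperideal, and that for the zero hyperideal $I=\{0\}$ one has $IP=\{0\}$ for every hyperideal $P$; consequently the only quotient of $R$ that enters the statement of Theorem \ref{3.9} is $R/IP=R/\{0\}\cong R$ itself. Thus, in this situation, the condition ``each quotient of $R$ is a product of hyperfields'' is equivalent to ``$R$ is a product of hyperfields,'' and it only remains to keep track of the number of factors.

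For the forward implication I would argue as follows. Assuming every proper hyperideal of $R$ is an $n$-absorbing $\{0\}$-prime hyperideal, Theorem \ref{3.9} applied with $I=\{0\}$ gives $R\cong F_1\times\cdots\times F_k$ for hyperfields $F_1,\dots,F_k$ with $k\le n+1$. Because the only hyperideals of a hyperfield are $\{0\}$ and the whole hyperfield, the maximal hyperideals of $F_1\times\cdots\times F_k$ are precisely the $k$ coordinate hyperideals $F_1\times\cdots\times\{0\}\times\cdots\times F_k$, so $|Max(R)|=k$. The standing hypothesis $|Max(R)|\ge n+1$ then forces $k=n+1$, which is exactly the asserted decomposition $R\cong F_1\times\cdots\times F_{n+1}$.

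For the converse, suppose $R\cong F_1\times\cdots\times F_{n+1}$ with each $F_i$ a hyperfield. As already used for direct products in Theorems \ref{qqqqq} and \ref{3.7}, every hyperideal of $R$ has the form $A_1\times\cdots\times A_{n+1}$ with $A_i\in\{\{0\},F_i\}$, and hence every quotient of $R$ is isomorphic to $\prod_{i\,:\,A_i=\{0\}}F_i$, a product of at most $n+1$ hyperfields. So each quotient of $R$ is a product of hyperfields, and Theorem \ref{3.9} (again with $I=\{0\}$) yields that every proper hyperideal of $R$ is an $n$-absorbing $\{0\}$-prime hyperideal, i.e., an $n$-absorbing $0$-hyperideal.

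The only point that needs care is the count of hyperfield factors: Theorem \ref{3.9} by itself delivers only a splitting of each quotient into at most $n+1$ hyperfields, whereas the corollary claims the precise decomposition into $n+1$ factors for $R$ itself. I expect this to be the main (and essentially the only) obstacle, and it is resolved purely by invoking $|Max(R)|\ge n+1$ to pin the number of factors down to exactly $n+1$; everything else is a direct reading of the previous theorem.
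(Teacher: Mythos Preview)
Your proposal is correct and matches the paper's intent: the corollary is stated there without proof precisely because it is the specialization $I=\{0\}$ of Theorem~\ref{3.9}, using that $IP=\{0\}$ forces $R/IP\cong R$. Your extra bookkeeping about the number of hyperfield factors is harmless but unnecessary, since Theorem~\ref{3.9} (and its proof via $n+1$ distinct maximal hyperideals) already yields exactly $n+1$ factors, not merely at most $n+1$.
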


\begin{theorem} \label{3.10}
	Let $P$ be an $n-$absorbing $I-$prime hyperideal of a hyperring $R$. Then there are at most $n$ prime hyperideals    of $R$ that are minimal over $P$.	
	
\end{theorem}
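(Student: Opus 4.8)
The plan is to mimic the classical argument that an $n$-absorbing ideal has at most $n$ minimal primes, adapting it to the hyperring/hyperideal setting and to the weaker ``$I$-prime'' hypothesis. First I would reduce to the case of prime hyperideals minimal over $P$ sitting inside $\sqrt{P}$, and recall from Theorem~3.6 that $\sqrt{P}$ is itself an $n$-absorbing $\sqrt{I}$-prime hyperideal with $a^{n}\subseteq P$ for every $a\in\sqrt{P}$. So without loss of generality I may as well work with $\sqrt{P}$ and assume $P$ is a radical hyperideal whose $n$-th powers of elements lie in $P$ (this is the key structural consequence I will use repeatedly).

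The main step is a pigeonhole/avoidance argument. Suppose, for contradiction, that $Q_{1},\dots,Q_{n+1}$ are distinct prime hyperideals each minimal over $P$. By minimality and distinctness, for each $i$ I can pick $a_{i}\in Q_{i}$ with $a_{i}\notin Q_{j}$ for all $j\neq i$ --- here I would invoke the Prime Avoidance Lemma exactly as it is used in the proof of Theorem~\ref{3.9}, noting that $Q_i\not\subseteq\bigcup_{j\neq i}Q_j$ since otherwise $Q_i\subseteq Q_j$ for some $j$, contradicting minimality (incomparability of minimal primes). Since each $a_i\in Q_i$ and each $Q_i\supseteq P$, some power of $a_i$ lies in $P$; using the $a^{n}\subseteq P$ consequence from Theorem~3.6 applied to $\sqrt P$, I get $a_i^{\,n}\subseteq\sqrt P$ and, more usefully, I form the product $b:=a_{1}\cdots a_{n+1}$ of the $n+1$ chosen elements. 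Then $b\subseteq\sqrt P$, and I want to feed a suitable product of $n+1$ factors into the $n$-absorbing $\sqrt I$-prime property of $\sqrt P$. The delicate point is handling the ``$-\sqrt{IP}$'' exclusion: I must ensure the product I test is \emph{not} inside $\sqrt{IP}$. If it is inside $\sqrt{IP}\subseteq\sqrt{I}\subseteq$ each $Q_i$ anyway, I argue separately that then $P\subseteq IP$-type degeneracy forces the conclusion, or I perturb the $a_i$ by elements of $P$ (as in the proof of Theorem~\ref{qqqq}(2)) to push the product out of $\sqrt{IP}$ while keeping it in $\sqrt P$ and keeping the avoidance property. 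Granting that, the $n$-absorbing $\sqrt I$-prime property yields that the product of some $n$ of the $a_i$'s, say omitting $a_k$, lies in $\sqrt P\subseteq Q_k$; since $Q_k$ is prime, some $a_i$ with $i\neq k$ lies in $Q_k$, contradicting the choice $a_i\notin Q_k$. Hence at most $n$ minimal primes.

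The step I expect to be the main obstacle is precisely the ``$-\sqrt{IP}$'' bookkeeping: unlike the classical $n$-absorbing case (where the test product only needs to lie in $P$), here I need the stronger fact that some power-product lies in $P-IP$ (or in $\sqrt P-\sqrt{IP}$), and if every candidate product is trapped in $\sqrt{IP}$ the straightforward argument stalls. My contingency is to split into two cases: (i) $\sqrt{IP}\subsetneq\sqrt P$, in which case a dimension/chain argument plus the perturbation trick supplies a product in $\sqrt P-\sqrt{IP}$; and (ii) $\sqrt P\subseteq\sqrt{IP}$, i.e. $\sqrt P=\sqrt{IP}$, in which case the minimal primes over $P$ coincide with the minimal primes over $IP$, and one reduces to the ordinary $n$-absorbing statement for a genuine $n$-absorbing hyperideal (since $P^{2}\subseteq IP$ forces $\sqrt P$ to behave like the radical of an $n$-absorbing hyperideal). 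In both cases one lands on ``at most $n$'' minimal primes, completing the proof.
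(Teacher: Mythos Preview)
Your proposal has a genuine gap at the heart of the argument, and it occurs \emph{before} the ``$-IP$'' bookkeeping you flag as the main obstacle.

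You choose $a_i\in Q_i\setminus\bigcup_{j\neq i}Q_j$ and then assert that ``since each $a_i\in Q_i$ and each $Q_i\supseteq P$, some power of $a_i$ lies in $P$'', and that Theorem~3.6 gives $a_i^{\,n}\subseteq\sqrt P$. Both claims are false. Membership of $a_i$ in a minimal prime $Q_i$ over $P$ does \emph{not} force any power of $a_i$ into $P$: take $P=(xy)$ in $K[x,y]$, $Q_1=(x)$, $a_1=x$; no power of $x$ lies in $(xy)$. Theorem~3.6 says $a^n\subseteq P$ \emph{for $a\in\sqrt P$}, but your $a_i$ are specifically chosen with $a_i\notin Q_j$ for $j\neq i$, hence $a_i\notin\bigcap_j Q_j\supseteq\sqrt P$, so Theorem~3.6 does not apply. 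Consequently the product $b=a_1\cdots a_{n+1}$ is only known to lie in $\bigcap_{i=1}^{n+1}Q_i$, which need not equal $\sqrt P$ unless you already know these are \emph{all} the minimal primes; so you cannot feed $b$ into the $n$-absorbing property of $\sqrt P$.

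The paper's proof supplies exactly the missing ingredient: it invokes the localization characterization of minimal primes (\cite[Theorem~2.1]{qwer}), namely that for $x_i\in q_i$ with $q_i$ minimal over $P$ there exist $y_i\notin q_i$ and $t_i\ge 1$ with $y_ix_i^{t_i}\subseteq P$. This is how one manufactures elements of $P$ from elements of the $q_i$. The argument then uses the $n$-absorbing $I$-prime property to cut the exponent down to $n-1$, combines the $y_i$ into a sum $\sum_j y_j$ lying outside every $q_i$, and from $\big(\sum_j y_j\big)\prod_i x_i^{\,n-1}\subseteq P$ deduces $\prod_i x_i^{\,n-1}\subseteq P$; an $(n{+}1)$-st minimal prime then yields the contradiction. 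Your avoidance/pigeonhole outline is the right shape, but without the Huckaba-type witnesses $y_i$ it cannot get off the ground.
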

\begin{proof}
	
	Let $C=\{q_i: q_i \textnormal{ is a prime hyperideal minimal over} \,P \}$ and let $C$ has at least $n$ elements. Assume $q_1,\cdots, q_n \in C$ are distinct elements and $x_i \in q_i - \cup_{j\ne i} q_j $ for $i=1,\cdots,n$. By \cite[Theorem 2.1]{qwer}, there is a $ y_i \notin q_j$ such that $y_i x_i^{t_i} \subseteq P$ for $i=1,\cdots,n$ and for some positive integers $t_1,\cdots,t_n$. Since $x_i \notin \cap_{j=1}^n q_j$ and $P$ an $n-$absorbing $I-$prime hyperideal, we have $y_i x_i^{n-1} \in P$. As $x_i \notin \cap_{j=1}^n q_j$ and  $y_i x_i^{n-1}  \subseteq P \subseteq \cap_{j=1}^n q_j$, we get $y_i \in q_i-\cup_{j\ne i} q_j$, and so $y_i \notin \cap_{j=1}^n q_j$ for $i=1,\cdots,n$. Since $y_i x_i^{n-1}\subseteq P$, $\sum_{j=1}^{n} y_j \prod_{i=1}^n x_i^{n-1} \subseteq P$ and clearly $\sum_{j=1}^n y_j \notin q_i$, for $i=1,\cdots,n$, and being $P$ an $n-$absorbing $I-$prime hyperideal, we have $y_i x_i^{n-1} \in P$. As $x_i \notin \cap_{j=1}^n q_j$ and $y_i x_i^{n-1} \subseteq P \subseteq \cap_{i=1}^n q_i$, we get $y_i \in q_i-\cup_{j\neq i} q_j$ and so $y_i \notin \cap_{i=1}^n q_i $ for $i=1, 
	\cdots n$. Since $y_i x_i^{n-1} \subseteq P$, $\varSigma_{j=1}^n y_j \prod_{i=1}^nx_i^{n+1} \subseteq P$ and clearly $\varSigma_{j=1}^n y_j \notin q_i$, for $i=1, 
	\cdots n$ and being $P$ an $n-$absorbing $I-$prime hyperideal, we have $\prod_{i=1}^n x_i^{n-1} \subseteq P$. Now, suppose $q_{n+1} \in C$ such that $q_{n+1} \neq q_i$, for $i=1, \cdots ,n$ and consequently $z_i \in q_{n+1}$ for $i=1, \cdots ,n$ which is a contradiction. Therefore $C$ has at least $n$ elements.  
	
	\end{proof}

In a multiplicative hyperring $(R,+, \circ)$ a non empty subset $L$ of $R$ is called a multiplicative set whenever $a, b \in A \Rightarrow a \circ b \cap A \neq \phi$.

We can contract the localization of a multiplicative hyperring $R$ as follows:
Let $S$ be a multiplicative closed subset of $R$, that is, $S$ is closed under the hypermultiplication and contains the identity. Let $S^{-1}R$ be the set $(R \times S / \sim)$ of equivalence classes where 

$$(r_1, s_1) \sim (r_2,s_2) \iff \exists s \in S\,\, \text{such that } ss_1r_2=ss_2r_1.$$
Let $r/s$ be the equivalence class of $(r,s) \in R \times S$ under the equivalence relation $ \sim$. The operation addition and the   hyperoperation multiplication are defined by $$\frac{r_1}{s_1}+\frac{r_2}{s_2}= \frac{s_1r_2+s_2r_1}{s_1s_2} =\{ \frac{a+b}{c}: a \in s_1r_2, b \in s_2r_1, c \in s_1s_2 \} $$  $$ \frac{r_1}{s_1} \cdot {\frac{r_2}{s_2}}= \frac{r_1r_2}{s_1s_2}=  \{\frac{a}{b}, a\in r_1r_2, b \in s_1s_2\}.$$

Note that the localization map $f: R \to S^{-1}R$, $f(r)= \frac{r}{1}$ is a homomorphism of hyperrings. It is easy to see that the localization of a hyperideal is a hyperideal.
\begin{prop}
	Let $P$ be an $I-$prime hyperideal of $R$ with $S \cap P = \emptyset$. Then $S^{-1}P$ is an $S^{-1}I-$prime hyperideal of $S^{-1}R$.
\end{prop}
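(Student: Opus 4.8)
The plan is to verify the two defining requirements of an $S^{-1}I$-prime hyperideal of $S^{-1}R$: that $S^{-1}P$ is proper, and that it satisfies the $I$-prime condition with respect to $S^{-1}I$. Properness is immediate from $S\cap P=\emptyset$: if $S^{-1}P=S^{-1}R$ then $\tfrac11\in S^{-1}P$, which forces $s\in P$ for some $s\in S$, a contradiction. The same remark records what will do the real work below, namely that $u\notin P$ for every $u\in S$.

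Before handling the $I$-prime condition I would isolate the routine identity $(S^{-1}I)(S^{-1}P)=S^{-1}(IP)$, whose two inclusions follow directly from the definitions of the product of localized hyperideals and of $S^{-1}(IP)$, using that $S$ is closed under the hypermultiplication; consequently $S^{-1}P-(S^{-1}I)(S^{-1}P)=S^{-1}P-S^{-1}(IP)$. Now take $\tfrac as,\tfrac bt\in S^{-1}R$ with $\tfrac as\cdot\tfrac bt=\tfrac{a\circ b}{s\circ t}\subseteq S^{-1}P-S^{-1}(IP)$; the goal is $\tfrac as\in S^{-1}P$ or $\tfrac bt\in S^{-1}P$. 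The step I expect to be the main obstacle is clearing denominators back into $R$: from $\tfrac{a\circ b}{s\circ t}\subseteq S^{-1}P$ one must produce a \emph{single} $u\in S$ with $u\circ a\circ b\subseteq P$, and from $\tfrac{a\circ b}{s\circ t}\nsubseteq S^{-1}(IP)$ one then needs, after multiplying $u$ by a further element of $S$ if necessary, $u\circ a\circ b\nsubseteq IP$. This is the one place where the argument must go beyond the classical proof, since $a\circ b$ is a set rather than a single element, so one has to check that the equivalence relation defining $S^{-1}R$ together with the closure of $S$ does deliver a common $u$.

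Granting that, the rest is short. First $a\circ b\nsubseteq IP$, for otherwise $u\circ a\circ b\subseteq u\circ(IP)\subseteq IP$ because $IP$ is a hyperideal, contradicting the choice of $u$. Next, by the Lemma stating that for subsets $A,B$ of $R$ with $AB\subseteq P$ and $AB\nsubseteq IP$ one has $A\subseteq P$ or $B\subseteq P$ (applied with $A=\{u\}$ and $B=a\circ b$, so that $AB=u\circ a\circ b$), we obtain $u\in P$ or $a\circ b\subseteq P$; the first alternative is excluded since $u\in S$, so $a\circ b\subseteq P$. Thus $a\circ b\subseteq P-IP$, and as $P$ is $I$-prime we conclude $a\in P$ or $b\in P$, hence $\tfrac as\in S^{-1}P$ or $\tfrac bt\in S^{-1}P$. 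Therefore $S^{-1}P$ is an $S^{-1}I$-prime hyperideal of $S^{-1}R$; apart from the denominator-clearing subtlety, this follows the pattern of the classical localization argument in \cite{Akray}.
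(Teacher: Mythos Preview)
Your proposal follows the same route as the paper's proof: identify $(S^{-1}I)(S^{-1}P)$ with $S^{-1}(IP)$, clear denominators to pull the hyperproduct back into $P-IP$, and then apply the $I$-prime hypothesis on $R$. You are in fact more careful than the paper, which omits the properness check and jumps from a three-term hyperproduct $qr_1r_2\subseteq P-IP$ directly to ``$qr_1\subseteq P$ or $r_2\in P$''; your explicit appeal to the lemma on subsets (Lemma~3.2) and your separate verification that $a\circ b\nsubseteq IP$ fill exactly the steps the paper leaves implicit.
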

\begin{proof}
	$ \frac{r_1}{s_1} , \frac{r_2}{s_2}  \in S^{-1}R$ with  $\frac{r_1}{s_1} \frac{r_2}{s_2}= \frac{r_1r_2}{s_1s_2} \subseteq S^{-1}P-S^{-1}IS^{-1}P=S^{-1}P-S^{-1}(IP)$. For each $n \in r_1r_2, s \in s_1s_2$, we have $\frac{n}{s} \in \frac{r_1r_2}{s_1s_2}$ and $\frac{n}{s}=\frac{a}{t}$, where $a \in P$, $t \in S$. So there exists $q \in S$ such that $qtn=qsa$. Hence $qtn \subseteq P-IP$ and so $qr_1r_2 \subseteq P-IP$. As $P$ is an $I-$prime hyperideal, we have $qr_1 \subseteq P$ or $r_2 \in P$. Thus $\frac{r_1}{s_1}=\frac{qr_1}{qs_1} \in P$ or $\frac{r_2}{s_2} \in S^{-1}P$. Therefore $S^{-1}P$ is an $S^{-1}I-$prime hyperideal of $S^{-1}R$.
	\end{proof}
{\bf Questions} Readers can think about the follwoing subjects:
\begin{enumerate}
	\item $2-I-$primal hyperideals
	\item $2-I-$primal hypersubmodules
	\item $2-I-$prime hypersubmodules
	\end{enumerate}

{\bf Conclosion.} In this article we transfer notions $I-$prime ideals and $n-$absorbing $I-$ideals in multiplicative hyperrings and named them $I-$prime hyperideal and $n-$absorbing $I-$prime hyperideal. We study some properties of such two concepts and we see that they have analogous properties of prime ideals. During the study, we found out similar consepts that one can think about like $2-I-$primal hyperideals, $2-I-$primal hypersubmodules and $2-I-$prime hypersubmodules.

\vspace*{.1 in}
{\bf Acknowleghments.}
The authors would like to express their sincere thanks to the anonymous referees for their valuable comments and  suggestions on the earlier version of the manuscript which improved it a lot.

\end{document}